\newcommand{\leqnomode}{\tagsleft@true}
\newcommand{\reqnomode}{\tagsleft@false}
\newenvironment{subproof}[1][\proofname]{%
  \begin{proof}[#1]%
}{%
  \end{proof}%
}
\title{Orientations of cycles in digraphs of high chromatic number and high minimum out-degree}
\author[1]{Hidde Koerts}
\author[2]{Benjamin Moore}
\author[1]{Sophie Spirkl\thanks{Emails: (hkoerts, sspirkl)@uwaterloo.ca, benjamin.moore@ist.ac.at \\
Benjamin Moore is supported by ERC Starting Grant ``RANDSTRUCT'' No.\ 101076777.\\
We acknowledge the support of the Natural Sciences and Engineering Research Council of Canada (NSERC), [funding reference number RGPIN-2020-03912].
Cette recherche a \'et\'e financ\'ee par le Conseil de recherches en sciences naturelles et en g\'enie du Canada (CRSNG), [num\'ero de r\'ef\'erence RGPIN-2020-03912]. This project was funded in part by the Government of Ontario. This research was conducted while Spirkl was an Alfred P. Sloan Fellow.}
}
\affil[1]{University of Waterloo, Department of Combinatorics and Optimization, Waterloo, Canada}
\affil[2]{Institute of Science and Technology Austria, Klosterneuburg, Austria}
\date{\today}
\newtheorem{theorem}{Theorem}[section]
\newtheorem{lemma}[theorem]{Lemma}
\newtheorem{question}[theorem]{Question}
\theoremstyle{remark}
\newtheorem{claim}{Claim}
\theoremstyle{definition}
\newtheorem{definition}[theorem]{Definition}
\begin{document}
\maketitle
\begin{abstract}
    We characterize all orientations of cycles $C$ for which for every fixed $\varepsilon > 0$ there exists a constant $c \geq 1$ such that every digraph $D$ without loops or parallel arcs with $\chi(D) \geq c$ and minimum out-degree at least $\varepsilon |V(D)|$ contains $C$ as a subdigraph. This generalizes a result of Thomassen.
\end{abstract}

\maketitle

\newpage

\section{Introduction}

In 1972, Erdős and Simonovits asked whether every graph with no fixed odd cycle as a subgraph and large minimum degree has bounded chromatic number~\cite{ERDOS1973323}. They specifically considered the case of forbidding a triangle, and additionally posed the question for graphs with no $C_5$ as a subgraph. Their questions can be generalized to the following;

\begin{question}\label{question:cyclesundirected}
    For which $\varepsilon > 0$ and $k > 0$ does there exist a bound $c$ such that each simple graph $G$ either contains $C_{2k+1}$ as a subgraph or $\chi(G) \leq c$ (or both)?
\end{question}
In their paper, Erdős and Simonovits, in collaboration with Hajnal, show that for $C_3$, no such bound exists if $\varepsilon < 1/3$. Thomassen subsequently answered the question in the affirmative for $C_3$ for $\varepsilon > 1/3$~\cite{Thomassen2002}, as well as for all $\varepsilon > 0$ for all longer odd cycles~\cite{Thomassen2007}. An affirmative answer for even cycles follows from the fact that any sufficiently large graph with a linear number of edges contains a large complete bipartite subgraph, as shown by Alon, Krivelevich, and Sudakov~\cite{alon2003turan}. Brandt and Thomass\'{e} additionally showed that all $C_3$-free graphs with minimum out-degree at least $\varepsilon |V(G)|$ for $\varepsilon \geq 1/3$ are in fact four-colourable~\cite{brandt2011dense}.

\autoref{question:cyclesundirected} has been studied extensively in a more general setting under the name \emph{chromatic threshold}, where the fixed odd cycle is replaced by any fixed graph. See for instance~\cite{goddard2011} and~\cite{luczak2010coloring}.
This more general version was completely resolved by Allen, B\"{o}ttcher, Griffiths, Kohayakawa, and Morris~\cite{ALLEN2013261}. 

In this paper, we investigate a directed analogue for the chromatic threshold of cycles. An \emph{orientation}\footnote{We note that the term \emph{orientation} is sometimes used in the literature for digraphs that do not contain loops and for each pair of vertices there exists at most one arc between them. We refer to such digraphs as \emph{simple digraphs}.} of an undirected graph $G$ is a digraph $\vec{G}$ on $V(G)$ where for each edge $uv \in E(G)$ exactly one of the arcs $uv$ and $vu$ is contained in $E(\vec{G})$. The main question we investigate is the following:

\begin{question}\label{question:mainquestion}
    Which orientations of cycles are contained in every digraph with high minimum out-degree and high chromatic number?
\end{question}

Specifically, we will consider digraphs $D$ without loops or parallel arcs with minimum out-degree at least $\varepsilon|V(D)|$ for some fixed $\varepsilon > 0$. Note that we do allow the digraphs to contain anti-parallel arcs, and thus directed $2$-cycles. In this paper, we answer \autoref{question:mainquestion} fully. To formally state our main result, we first introduce some definitions and notation.

We will use sequences of arrows to denote orientations of paths, where the individual arrows correspond to arcs. For instance, $\rightarrow \rightarrow \leftarrow$ corresponds to an orientation of a path on four vertices $v_1$, $v_2$, $v_3$, and $v_4$, given by arcs $v_1v_2$, $v_2v_3$, and $v_4v_3$. If the vertices are specified, we will denote the orientation by $v_1 \rightarrow v_2 \rightarrow v_3 \leftarrow v_4$.

A \emph{block} of an orientation $C$ of a cycle is a maximal connected subdigraph that does not contain $\rightarrow \leftarrow$ or $\leftarrow \rightarrow$ as a subdigraph. Note that if $C$ contains only a single block, then $C$ is a directed cycle, and if $C$ contains at least two blocks, the blocks are exactly the maximal directed subpaths of $C$. Moreover, if $C$ has at least two blocks, the number of blocks is even. Consistent with the usual terminology for paths and cycles, the \emph{length} of a block $B$ is defined as $|E(B)|$.

The main result of this paper is the following theorem, which fully answers \autoref{question:mainquestion}.

\begin{theorem}\label{thm:maintheorem}
    Let $\varepsilon > 0$ and $k \geq 2$ be fixed. For $C$ an orientation of a $k$-cycle, there exists a constant $c \geq 1$ such that every digraph $D$ without loops or parallel arcs with $\chi(D) \geq c$ and minimum out-degree at least $\varepsilon|V(D)|$ contains $C$ as a subdigraph if and only if $C$ either consists of at least three blocks or consists of two blocks, both of length at least two.
\end{theorem}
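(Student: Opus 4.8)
The statement is an equivalence, so I would prove it in two directions. For \emph{necessity} I must show that if $C$ is a directed cycle, or has exactly two blocks one of which has length $1$, then for every $c$ there is a digraph $D$ with $\chi(D)\ge c$, minimum out-degree at least $\varepsilon|V(D)|$, and no copy of $C$. For \emph{sufficiency} I must show that if $C$ has at least four blocks (recall the number of blocks is even, so ``at least three'' means at least four), or has exactly two blocks both of length at least $2$, then some $c$ works. I would treat necessity by explicit constructions and sufficiency by an embedding argument, the latter subdividing further into the two admissible families.

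\textbf{Necessity.} The guiding idea is a layered construction. Choose an integer $\ell\ge 3$ with $\ell\nmid k$ in the directed-cycle case, and with $\ell\nmid k-2$ in the two-block case; equip the vertex set with a level function $\phi$ valued in $\mathbb Z_\ell$ such that every arc increases $\phi$ by $1$. Then every directed cycle has length $\equiv 0\pmod\ell$, and for any arc $uv$ every directed $u$--$v$ path has length $\equiv 1\pmod\ell$; the first rules out the directed $k$-cycle, and the second rules out the two-block orientation, whose long side is a directed path of length $k-1\equiv 1\pmod\ell$ joining the ends of the arc forming its short side. A blow-up of $\vec C_\ell$ realises such a $\phi$ but has chromatic number only $3$, so the remaining work is to inflate the chromatic number without destroying either $\phi$ or the minimum out-degree: I would do this by an iterated substitution, replacing levels by scaled copies of extremal undirected graphs of unbounded chromatic number and linear minimum degree (the triangle-free graphs of Erd\H{o}s--Hajnal--Simonovits type where the relevant residues force triangle-freeness), wired so as to respect the global levelling and to create no directed path of a forbidden residue. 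This constrains $\varepsilon$ to lie below a threshold depending on $\ell$; at the boundary the obstructions are exactly the known ones (Caccetta--H\"aggkvist / Brandt--Thomass\'e phenomena for short directed cycles, and the Erd\H{o}s--Hajnal--Simonovits threshold for the undirected ingredient). I expect the simultaneous control of the three quantities — residues of all directed path and cycle lengths, minimum out-degree, and chromatic number — to be the most delicate part of this direction.

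\textbf{Sufficiency.} Here I would use two facts about a digraph $D$ with $\chi(D)\ge c$ and minimum out-degree at least $\varepsilon n$, where $n=|V(D)|$: by the Gallai--Roy theorem $D$ contains a directed path on $c$ vertices, and by a counting argument a linear number of vertices have in-degree at least $\tfrac\varepsilon2 n$ (otherwise $D$ has $O(n)$ arcs and bounded degeneracy, contradicting $\chi(D)\ge c$ once $c$ is large). The embedding then proceeds by induction on the number of blocks of $C$. If $C$ is not a directed cycle it has a source $s$; deleting $s$ leaves an orientation $P$ of a path whose two ends are the out-neighbours of $s$ in $C$, and which either has fewer blocks than $C$ or (when $C$ has exactly two blocks) is a concatenation of two directed paths of positive length into a common sink. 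I would embed $P$ recursively: route its directed sub-paths greedily through the dense out-neighbourhoods, anchor each sink of $P$ at a vertex of large in-degree so that two incoming directed paths can be completed, use the long directed path from Gallai--Roy to realise a longest block, and arrange for the two ends of $P$ to be distinct out-neighbours of a freshly chosen high-out-degree vertex, which then plays the role of $s$. In the base case ($P$ a directed path, or two short paths into a common sink) the key sub-step is to build a directed path of prescribed length from a given start that ends \emph{at} a prescribed vertex $v$: after stepping forward all but one step one needs to land in $N^-(v)$, and this is exactly where the chromatic largeness is spent.

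\textbf{Main obstacle.} The crux throughout is the asymmetry of the hypotheses: out-degrees are linear but in-degrees are uncontrolled, so a vertex of small in-degree can be reached only from its few in-neighbours along paths of essentially forced length. This is precisely the feature the necessity constructions exploit — both a directed $k$-cycle and a two-block orientation with a length-$1$ block require reaching a fixed vertex ``from behind'' along a directed path whose length is pinned down, which the layered construction can forbid. To get around it in the admissible cases I would repeatedly replace $D$ by a subdigraph that remains chromatically large while acquiring in-degree structure — using that a digraph all of whose vertices have small in-degree has bounded degeneracy, hence bounded chromatic number — so that inside this subdigraph backward expansion, and thus the completion of directed paths into prescribed vertices, becomes available. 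Making this trade-off quantitatively compatible with the inductive removal of blocks is, I expect, the technical heart of the sufficiency direction.
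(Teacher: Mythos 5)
Your necessity direction has a fatal structural flaw. If every arc increases a level function $\phi\colon V\to\mathbb{Z}_\ell$ by exactly $1 \pmod \ell$, then each level is an independent set and all edges of the underlying graph join consecutive levels, so the digraph is a subgraph of a blow-up of an $\ell$-cycle by independent sets and has chromatic number at most $3$. Hence there is nothing to ``inflate'': you cannot raise the chromatic number while preserving $\phi$, and the moment you place a chromatically large oriented graph inside a level (your proposed substitution), Gallai--Roy already gives directed paths of every length up to $k$ inside that level, so the residue statements (``every directed cycle has length $\equiv 0$'', ``every directed $u$--$v$ path across an arc $uv$ has length $\equiv 1$'') on which the whole argument rests are destroyed, and no replacement argument is offered. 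This is not a repairable technicality: the paper's counterexamples have to look quite different. For directed $k$-cycles it blows up a directed $(k+1)$-cycle by \emph{transitive tournaments} (which have arcs inside the parts, hence admit no such $\phi$); and for the two-block orientation with a length-one block even that fails (a transitive tournament on $k$ vertices contains that orientation), so the paper builds on a directed shift graph $\vec{G}_{2m,2k}$, which has unbounded chromatic number precisely because it carries no global levelling, and then restores linear minimum out-degree by attaching the gadget sets $S$, $T$, a path $P$, and cloning vertices, with a separate argument (\autoref{lem:cloning} and the two claims in \autoref{thm:counterexample-flipped}) that none of these operations creates a forbidden cycle. None of this machinery, nor any workable substitute for it, is present in your sketch.

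The sufficiency direction also has a real gap at exactly the point you defer. Your base step asks to build a directed path of prescribed length from a given start that \emph{ends at a prescribed vertex}, and your induction anchors sinks of $C$ at vertices of large in-degree; but the ability to finish a directed path at a prescribed vertex is precisely what the hypotheses do not grant (it is the operation the counterexamples exploit), and vertices of large in-degree need not have in-neighbourhoods of large chromatic number or be reachable by your greedy forward routing, so ``this is where the chromatic largeness is spent'' is an assertion, not an argument. The paper avoids this entirely: it never closes a path into a prescribed vertex, but instead first fixes a vertex $v_i$ (or a cohesive set, \autoref{lem:cohesive}) whose out-neighbourhood will supply the last vertices of the cycle, finds the remaining orientation of a path via Burr's theorem (\autoref{thm:existence}) inside a chromatically large set all of whose vertices have at least two out-neighbours in that out-neighbourhood, and then completes the cycle using only \emph{forward} arcs from both path ends (Lemmas \ref{lem:rlrl} and \ref{lem:rrll}, reducing the general $\geq 3$-block case to the patterns $\rightarrow\leftarrow\rightarrow\leftarrow$ and $\rightarrow\rightarrow\leftarrow\leftarrow$ in \autoref{lem:threeblocks}). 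Note also that Gallai--Roy only yields directed paths; to embed arbitrary orientations of paths (which your recursion needs) you would have to invoke Burr's theorem or prove an analogue, and your in-degree observation (small in-degrees everywhere force bounded degeneracy), while correct, is only the first step of the kind of structure the paper extracts with its greedy sequences $v_1,\ldots,v_\ell$ and cohesive sets; the quantitative bookkeeping that makes the completion step legitimate is missing from the proposal.
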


Observe by reversing all arcs, \autoref{thm:maintheorem} directly implies an analogous result for digraphs of high minimum in-degree. Moreover, by replacing each edge in a graph by two anti-parallel arcs, \autoref{thm:maintheorem} implies the result of Thomassen for odd cycles of length at least five~\cite{Thomassen2007}.

Our proof of the backwards direction of \autoref{thm:maintheorem} is similar in strategy to Thomassen's result for $C_5$ in the undirected case~\cite{Thomassen2007}. His proof proceeds as follows; first he finds a set $S$ of bounded size such that the union of the first and second neighbourhoods of vertices in $S$ contains the entire graph, and second, for each vertex $v \in S$, the union of $v$, its first neighbourhood and its second neighbourhood either contains $C_5$ or is a set of bounded chromatic number. The first part follows in the undirected setting due to the large minimum degree assumption, and we use a similar setup with large minimum out-degree instead. For the second step, Thomassen uses the fact that every graph with large chromatic number contains a long path. We need a digraph analogue of this. In 1980, Burr showed that every digraph of sufficiently large chromatic number contains any fixed-size orientation of a tree~\cite{burr1980subtrees}.

\begin{theorem}[{\cite[Theorem 2]{burr1980subtrees}}]\label{thm:existence}
    For each $k > 0$ there exists a constant $c_k$ such that any orientation of a graph $G$ with $\chi(G) \geq c_k$ contains every oriented tree of order $k$ as a subdigraph.
\end{theorem}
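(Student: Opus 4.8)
The plan is to prove this by induction on $k$, showing in fact that one may take $c_k = k^2$ (any explicit bound of this shape would serve — only existence is needed later). The base case $k=1$ is immediate: a digraph with chromatic number at least $1$ has a vertex, hence contains $K_1$.

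For the inductive step, let $T$ be an oriented tree on $k \ge 2$ vertices and let $D$ be a (loopless) digraph with $\chi(D) \ge k^2$. Reversing every arc of both $D$ and $T$ changes neither hypothesis nor conclusion, so I may assume $T$ has a leaf $\ell$, with unique neighbour $u$, such that the incident arc is oriented $u \to \ell$; set $T' := T - \ell$, an oriented tree on $k-1$ vertices. The idea is to embed $T'$ into a high-chromatic part of $D$ chosen so that the image of $u$ is forced to have a spare out-neighbour available to play the role of $\ell$.

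Concretely, let $S$ be the set of vertices of $D$ of out-degree at least $k$. Every vertex outside $S$ has out-degree at most $k-1$, hence so does every vertex in every induced subdigraph of $D-S$; since a digraph on $n$ vertices with all out-degrees at most $k-1$ has at most $(k-1)n$ arcs, its underlying graph has a vertex of degree at most $2k-2$. Thus the underlying graph of $D-S$ is $(2k-2)$-degenerate, so $\chi(D-S) \le 2k-1$, and by subadditivity of chromatic number over a vertex partition, $\chi(D[S]) \ge \chi(D) - (2k-1) \ge (k-1)^2 = c_{k-1}$. The inductive hypothesis applied to $D[S]$ yields an embedding $\psi$ of $T'$ into $D[S]$. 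Its image has only $k-1$ vertices, whereas $\psi(u) \in S$ has at least $k$ out-neighbours in $D$, so some out-neighbour $y$ of $\psi(u)$ lies outside $\psi(V(T'))$; extending $\psi$ by sending $\ell$ to $y$ gives a copy of $T$ in $D$, completing the induction.

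The only step with real content is the claim that discarding the low-out-degree vertices costs only a bounded amount of chromatic number — equivalently, that a digraph all of whose out-degrees are below $k$ is properly colourable with $O(k)$ colours; everything else is bookkeeping. It is worth noting why one works with $D[S]$ rather than, say, trying to extract a transitive tournament (which would immediately contain every oriented tree on $k$ vertices): such a tournament need not exist even when $\chi(D)$ is enormous, for instance in an acyclic orientation of a high-girth graph of high chromatic number, which contains no transitive tournament on three vertices. The argument above is essentially Burr's; tuning the constants recovers his bound $(k-1)^2$, but any function of $k$ suffices for the present purposes.
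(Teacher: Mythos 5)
Your proof is correct, but note that the paper does not prove this statement at all: it is imported as a black box from Burr's paper (and only the existence of constants $c_k$, via the optimal values $c^*_k$, is used later), so there is no in-paper argument to compare against. Checking your induction: the reversal trick legitimately reduces to a leaf $\ell$ whose incident arc is oriented $u \to \ell$ (if every leaf arc of $T$ points away from its leaf, the reversed tree has all leaf arcs pointing toward the leaves); discarding the vertices of out-degree below $k$ costs at most $2k-1$ colours by your degeneracy bound, so $\chi(D[S]) \ge k^2 - (2k-1) = (k-1)^2$; and the inductive hypothesis, which quantifies over all oriented trees of order $k-1$, applies directly to $T'$ in $D[S]$, after which the spare out-neighbour of $\psi(u)$ (it has at least $k$ out-neighbours while the image of $T'$ has only $k-1$ vertices) completes the embedding. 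This is essentially the standard proof of Burr's theorem rather than anything specific to this paper, and your closing remark is also right: taking $S$ to be the vertices of out-degree at least $k-1$ (so that $\chi(D-S) \le 2k-3$) tightens the recursion to $c_k = (k-1)^2$, matching Burr's bound, though for the purposes of this paper any bound depending only on $k$ would do.
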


We will use this result extensively for finding subdigraphs of the orientations of cycles. Given \autoref{thm:existence}, it is natural to ask what the optimal values are for the constants $c_k$. Let $c^*_k$ denote this optimal value for $k \geq 0$. That is, any orientation of a graph $G$ with $\chi(G) \geq c^*_k$ contains every oriented tree of order $k$ as a subdigraph, but this does not hold for all orientations of graphs $G$ with $\chi(G) = c^*_k-1$. In his seminal paper, Burr showed that $2k \leq c^*_k \leq (k-1)^2$ for all $k > 0$~\cite{burr1980subtrees}. The best currently known upper bound for $c^*_k$, the first sub-quadratic bound, is due to Bessy, Gon\c{c}alves, and Reinald~\cite{burrbest}.

For the other direction of \autoref{thm:maintheorem}, we provide a construction of digraphs of large minimum out-degree and large chromatic number that do not contain directed cycles or orientations of cycles obtained from a directed cycle by flipping the orientation of a single arc. Where the construction used by Erdős, Hajnal and Simonovits to show that no bound exists for $C_3$ if $\varepsilon < 1/3$ builds on a Kneser graph to provide large chromatic number~\cite{ERDOS1973323}, our construction uses a directed shift graph. The directed shift graph is then augmented to satisfy the large minimum out-degree requirement.

The paper is organized as follows, following the various cases we split the proof of \autoref{thm:maintheorem} into. In \autoref{section:specificsubpaths} we first show that the theorem holds for orientations of cycles containing specific directed subpaths. The results in said section cover most orientations of cycles. There are two types of orientations that are not covered by the results in \autoref{section:specificsubpaths}; directed cycles and orientations obtained from a directed cycle by flipping the orientation of a single arc. \autoref{section:counterexamples} provides constructions for these two types of orientations.

\subsection{Standard notation and terminology}

All digraphs considered in this paper do not contain loops or parallel arcs. The \emph{in-neighbourhood} of a vertex $v\in V(D)$ in a digraph $D$, denoted by $N^-_D(v)$, is the set of all vertices $u \in V(D)$ such that $uv \in E(D)$. Similarly, the \emph{out-neighbourhood}, denoted by $N^+_D(v)$, is the set of all vertices $u \in V(D)$ such that $vu \in E(D)$. We refer to the vertices in $N^-_D(v)$ and $N^+_D(v)$ as the \emph{in-neighbours} and \emph{out-neighbours} of $v$ respectively. Moreover, the \emph{closed in-neighbourhood} and \emph{closed out-neighbourhood} of a vertex $v \in V(D)$, denoted by $N^-_D[v]$ and $N^+_D[v]$, are defined as $N^-_D(v) \cup \{v\}$ and $N^+_D(v) \cup \{v\}$ respectively. The \emph{in-degree} of a vertex $v\in V(D)$ in a digraph $D$, denoted by $\deg^-_D(v)$, is then defined as $|N^-_D(v)|$. Analogously, the \emph{out-degree}, denoted by $\deg^+_D(v)$, is given by $|N^+_D(v)|$. If the digraph is clear from the context, we omit the subscript from the notation of the neighbourhoods and degrees. For a (di)graph $G$ and a vertex set $S \subseteq V(G)$, we use $G[S]$ to denote the sub(di)graph induced by $S$. For two disjoint sets $A, B \subseteq V(D)$, we say that $A$ is \emph{out-complete} to $B$ (or equivalently $B$ is \emph{in-complete} from $A$) if for all $a \in A$ and $b \in B$ we have that $ab \in E(D)$. We will use similar terminology if one of the two sets is a single vertex. Finally, for an integer $k > 0$ we use $[k]$ to denote the set $\{1, \ldots, k\}$.

\section{Orientations of cycles containing $\rightarrow\leftarrow\rightarrow\leftarrow$ or $\rightarrow\rightarrow\leftarrow\leftarrow$}\label{section:specificsubpaths}

In this section, we show the main result for orientations of cycles containing a subdigraph of the form $\rightarrow\leftarrow\rightarrow\leftarrow$ or of the form $\rightarrow\rightarrow\leftarrow\leftarrow$. We will subsequently use the results for these specific cases to reduce the number of cases we have to consider in the remainder of the paper significantly.

\begin{lemma}\label{lem:rlrl}
    Let $\varepsilon > 0$ and an integer $k \geq 4$ be fixed, and either let $C$ be an orientation of a $k$-cycle containing $\rightarrow \leftarrow \rightarrow \leftarrow$ as a subdigraph if $k \geq 5$, or let $C$ be the orientation of a $4$-cycle obtained by identifying the endpoints of $\rightarrow\leftarrow\rightarrow\leftarrow$ if $k = 4$. Let $D$ be a digraph with $|V(D)| \geq 12/\varepsilon^2$, $\chi(D) \geq 4c^*_{k-1}/\varepsilon$ and minimum out-degree at least $\varepsilon |V(D)|$. Then $D$ contains a subdigraph isomorphic to $C$. 
\end{lemma}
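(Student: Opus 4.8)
The plan is to reduce finding $C$ to finding a long oriented path via \autoref{thm:existence}, after splitting off one well-chosen vertex of $C$.

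\textbf{Structure of $C$.} Write the distinguished subpath as $a_1 \to a_2 \leftarrow a_3 \to a_4 \leftarrow a_5$ (with $a_1=a_5$ identified when $k=4$), and set $P := C - a_3$. Since $a_3$ has degree $2$ in $C$, $P$ is an oriented path on $k-1$ vertices; its two endpoints are exactly the former neighbours $a_2$ and $a_4$ of $a_3$, and both of them have out-degree $0$ in $P$ (each was a local sink of $C$, with its unique remaining incident arc directed into it). Moreover $C$ is recovered from $P$ by re-adding $a_3$ together with the two arcs from $a_3$ to the endpoints of $P$. Hence it suffices to find in $D$ a copy of $P$, on some vertex set $U$, together with a vertex $w \notin U$ that is an in-neighbour of both endpoints of that copy; mapping $a_3 \mapsto w$ then yields a copy of $C$.

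\textbf{Applying Burr.} By \autoref{thm:existence}, every subdigraph of chromatic number at least $c^*_{k-1}$ contains $P$ (an oriented tree on $k-1$ vertices). So it is enough to produce a set $Y \subseteq V(D)$ with $\chi(D[Y]) \ge c^*_{k-1}$ all of whose pairs of vertices have at least $k$ common in-neighbours in $D$: applying \autoref{thm:existence} inside $D[Y]$ gives a copy of $P$ whose two endpoints lie in $Y$ and hence have $\ge k$ common in-neighbours; as that copy uses only $k-1$ vertices, one such common in-neighbour $w$ lies outside it, and $w$ is not one of the endpoints (no loops). A convenient way to enforce the common-in-neighbour condition simultaneously for all pairs of $Y$ is to take $Y \subseteq N^+(w_1)\cap N^+(w_2)$ for suitable distinct $w_1,w_2$, or to take $Y$ a colour class of the auxiliary graph on $V(D)$ whose edges join the pairs with fewer than $k$ common in-neighbours. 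Thus \autoref{lem:rlrl} reduces to producing such a $Y$.

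\textbf{Producing $Y$ — the crux.} This is where $\delta^+(D)\ge \varepsilon|V(D)|$ and $|V(D)| \ge 12/\varepsilon^2$ enter; note that $\chi(D) \ge 4c^*_{k-1}/\varepsilon \ge 8(k-1)/\varepsilon$ (using Burr's lower bound $c^*_{k-1}\ge 2(k-1)$) already forces $n:=|V(D)|$ to be large relative to $k$ and $1/\varepsilon$. By convexity of $x\mapsto\binom x2$ applied to the in-degree sequence, together with $\sum_v\deg^-(v)=|E(D)|\ge\varepsilon n^2$, one gets $\sum_{\{x,y\}}|N^-(x)\cap N^-(y)|=\sum_v\binom{\deg^-(v)}{2}\ge n\binom{\varepsilon n}{2}$, and with $n\ge 12/\varepsilon^2$ this shows the common-in-neighbour relation is extremely dense: the auxiliary graph of ``bad'' pairs has small chromatic number — about $4/\varepsilon$ — so one of its colour classes $Y$ satisfies $\chi(D[Y]) \ge \chi(D)/(4/\varepsilon) \ge c^*_{k-1}$, as needed. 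The factor $4/\varepsilon$ in the hypothesis on $\chi(D)$ is precisely this loss, and the constant $12/\varepsilon^2$ provides the slack that makes the ``$\ge k$ common in-neighbours'' threshold fall out of the count. The genuinely delicate point here — and what I expect to be the main obstacle — is handling vertices of small in-degree (sources being the extreme case): these lie in no out-neighbourhood and share few in-neighbours with anything, so they are invisible to the counting and must be treated separately; one argues that they induce a subdigraph of small degeneracy, hence of bounded chromatic number, so that setting them aside costs little. Balancing this against the $4/\varepsilon$ loss to land exactly at the stated constants is the part that requires care; the two steps above are routine once this reduction is in place.
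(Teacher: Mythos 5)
Your reduction (find a copy of $P := C - a_3$ whose two endpoints have a common in-neighbour outside the copy) is fine, but the crux of your argument --- producing a set $Y$ with $\chi(D[Y]) \ge c^*_{k-1}$ in which all pairs have at least $k$ common in-neighbours --- does not work, for two reasons. First, the counting step proves much less than you need: convexity gives $\sum_{\{x,y\}} |N^-(x)\cap N^-(y)| = \sum_v \binom{\deg^+(v)}{2} \ge n\binom{\varepsilon n}{2}$ (note out-degrees, not in-degrees as you wrote), which only bounds the \emph{number} of ``bad'' pairs on average; it says nothing about the chromatic number of the auxiliary graph of bad pairs, which can be large even when that graph is sparse (it may contain a clique of sublinear size), and in any case the average $\varepsilon^2 n \ge 12$ need not exceed the threshold $k$. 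Your proposed patch for low in-degree vertices is also false: the vertices of in-degree below $\varepsilon n/2$ can induce a bidirected clique on $\Theta(\varepsilon n)$ vertices, so they need not have small degeneracy or bounded chromatic number.

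More decisively, the intermediate statement you reduce to is simply not true under the hypotheses of \autoref{lem:rlrl}. The paper's own construction in \autoref{thm:counterexample-flipped} (the augmented directed shift graph) has arbitrarily large chromatic number and minimum out-degree $2^{-2k-3}|V(D)|$, yet any two \emph{adjacent} vertices of its shift-graph part have \emph{no} common in-neighbour (a common shift-predecessor or a common $t_{(A,B)}$-in-neighbour would force a repeated or misplaced entry in the tuples), and every out-neighbourhood there induces a subdigraph of chromatic number at most $2$. Consequently every set $Y$ in which all pairs have a common in-neighbour induces a subdigraph of chromatic number bounded by a small absolute constant, far below $c^*_{k-1} \ge 2(k-1)$; so for small $\varepsilon$ no such $Y$ exists even though the lemma's hypotheses hold and $D$ does contain $C$. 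This is why the paper's proof is a two-case argument: if some piece $S_i$ of an out-neighbourhood has $\chi(D[S_i]) \ge c^*_{k-1}$ it completes $C-u_3$ through the apex $v_i$ (your mechanism), but otherwise it switches to a genuinely different completion, finding $C-\{u_2,u_3,u_4\}$ in a set $X_{i^*}$ of vertices each having at least two out-neighbours in $S_{i^*}$ and re-attaching through two distinct vertices $x',y' \in S_{i^*}$ together with $v_{i^*}$. Your proposal has no analogue of this second mechanism, and without it the approach cannot be repaired.
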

\begin{proof}
    First, let $u_1, u_2, u_3, u_4, u_5 \in V(C)$ be consecutive vertices along $C$ such that $u_1 \rightarrow u_2 \leftarrow u_3 \rightarrow u_4 \leftarrow u_5$, where $u_1 = u_5$ if $k = 4$. We aim to first find $C - u_3$ or $C - \{u_2, u_3, u_4\}$ as a subdigraph in $D$, and subsequently extend said subdigraph to a subdigraph isomorphic to $C$. 

    Let $v_1, \ldots, v_{\ell}$ be a maximal sequence of vertices such that for all $i \in [\ell]$ it holds that
    \[\left| N^+(v_i) \cap \bigcup_{j=1}^{i-1} N^+[v_j]\right| < \frac{|N^+(v_i)|}{2}.\] Note that such a sequence exists as any sequence consisting of a single vertex satisfies the condition. Moreover, for $i \in [\ell]$ let 
    \[S_i := N^+(v_i) \setminus \bigcup_{j=1}^{i-1} N^+[v_j].\]
    Suppose that there exists an index $i \in [\ell]$ such that $\chi(D[S_i]) \geq c^*_{k-1}$. Then, by the definition of $c^*_{k-1}$, the digraph $D[S_i]$ contains a subdigraph $P$ isomorphic to $C - u_3$. Then, for $e_1, e_2$ the arcs from $v_i$ to the endpoints of $P$, we have that $P + e_1 + e_2$  is a subdigraph of $D$ isomorphic to $C$, as desired. Hence, we may assume that $\chi(D[S_i]) < c^*_{k-1}$ for all $i \in [\ell]$.
    
    By the definitions of the sequence $v_1, \ldots, v_{\ell}$ and the sets $S_1, \ldots, S_{\ell}$, and using the minimum out-degree of the digraph, we observe that
    \[|S_i| \geq \frac{|N^+(v_i)|}{2} \geq \frac{\varepsilon |V(D)|}{2}\] for all $i \in [\ell]$. Note that as sets $S_1, \ldots, S_{\ell}$ are disjoint, it follows that $\ell \leq |V(D)|/\left( \frac{\varepsilon|V(D)|}{2}\right) = 2/\varepsilon$.     

    Let $X:= V(G) \setminus \left( \bigcup_{i=1}^{\ell} S_i \cup \{v_i\}\right)$. By the sub-additivity of the chromatic number, and as $\ell \leq 2/\varepsilon$, it follows that 
    \[\chi(D[X]) \geq \chi(D) - \sum_{i = 1}^{\ell} \chi(D[S_i \cup \{v_i\}]) \geq 4c^*_{k-1}/\varepsilon - \sum_{i=1}^{\ell} c^*_{k-1} \geq 2c^*_{k-1}/\varepsilon.\]    
    Moreover, by the maximality of $\ell$, each vertex $v \in X$ has strictly fewer than $|N^+(v)|/2$ out-neighbours in $X$. Therefore, each such vertex $v \in X$ has at least $|N^+(v)|/2 \geq \varepsilon |V(D)|/2$ out-neighbours in $\bigcup_{i=1}^{\ell} (S_i \cup \{v_i\})$ and therefore at least $\varepsilon |V(D)|/2 - \ell$ out-neighbours in $\bigcup_{i=1}^{\ell} S_i$. 

    Let $X_1, \ldots, X_{\ell}$ be a partition of $X$ where $X_i$ for $i \in [\ell]$ is the set of vertices $v \in X$ where $|N^+(v) \cap S_j|$ is maximized for $j = i$ (in the case that this value is maximized for multiple indices, we assign the vertex arbitrarily to one of the corresponding sets). For each $i \in [\ell]$ and every vertex $v \in X_i$, it then holds that $|N^+(v) \cap S_i| \geq (\varepsilon |V(D)|/2 - \ell)/\ell \geq \varepsilon^2 |V(D)|/4 - 1$. Because $|V(D)| \geq 12/\varepsilon^2$, we then obtain that $|N^+(v) \cap S_i| \geq 2$.
    
    Furthermore, there exists an index $i^* \in [\ell]$ such that $\chi(D[X_{i^*}]) \geq \chi(D[X])/ \ell$. Then, by the previously established bound on $\chi(D[X])$,
    \[\chi(D[X_{i^*}]) \geq \frac{2c^*_{k-1}/\varepsilon}{2 / \varepsilon} = c^*_{k-1} \geq c^*_{k-3}\]
    By the definition of $c^*_{k-3}$, the digraph $D[X_{i^*}]$ contains a subdigraph $P$ isomorphic to $C-\{u_2, u_3, u_4\}$. Let $x, y \in X_{i^*}$ be the two endpoints of the path $P$. Note that if $k =4$, these endpoints are not distinct. As $x, y \in X_{i^*}$, we have that $|N^+(x) \cap S_{i^*}| \geq 2$ and $|N^+(y) \cap S_{i^*}| \geq 2$. Let $x', y'$ be distinct vertices in $S_{i^*}$ such that $xx', yy' \in E(D)$. Then $P+xx'+yy' +v_{i^*}x' + v_{i^*}y'$ is a subdigraph of $D$ isomorphic to $C$, as desired.    
\end{proof}

The structure of the proof of the next lemma is similar to that used in the proof of \autoref{lem:rlrl}.

\begin{lemma}\label{lem:rrll}
    Let $\varepsilon > 0$ and an integer $k \geq 4$ be fixed, and either let $C$ be an orientation of a $k$-cycle containing $\rightarrow \rightarrow \leftarrow \leftarrow$ as a subdigraph if $k \geq 5$, or let $C$ be the orientation of a $4$-cycle obtained by identifying the endpoints of $\rightarrow\rightarrow\leftarrow\leftarrow$ if $k = 4$. Let $D$ be a digraph with $|V(D)| \geq 48/\varepsilon^3 $, $\chi(D) \geq 16c^*_{k-1}/\varepsilon^2$ and minimum out-degree at least $\varepsilon |V(D)|$. Then $D$ contains a subdigraph isomorphic to $C$. 
\end{lemma}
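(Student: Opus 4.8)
The plan is to mirror the proof of \autoref{lem:rlrl} as closely as possible. Write $u_1 \to u_2 \to u_3 \leftarrow u_4 \leftarrow u_5$ for five consecutive vertices of $C$ (with $u_1 = u_5$ when $k = 4$). The one genuine difference with \autoref{lem:rlrl} is that the ``interior'' vertex $u_3$ is now a \emph{sink} rather than a source, so a greedy‑centre vertex, which is out‑complete to its petal, cannot by itself play $u_3$. To repair this I would equip every petal with a single ``target'' vertex that the whole petal points to.

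Concretely, for each vertex $v$ fix a vertex $z_v$ maximising $|N^+(v) \cap N^-(z_v) \setminus (\text{what is already used})|$, and build a maximal sequence $v_1,\dots,v_\ell$ with pairwise disjoint petals $S_i := N^+(v_i) \cap N^-(z_i) \setminus \bigcup_{j<i} S_j$, the condition for appending $v_i$ being $|N^+(v_i) \setminus \bigcup_{j<i} S_j| \ge \varepsilon|V(D)|/2$. Averaging $\sum_z |N^+(v_i) \cap N^-(z) \setminus \bigcup_{j<i}S_j| = \sum_{a \in N^+(v_i)\setminus\bigcup_{j<i}S_j}\deg^+(a) \ge \varepsilon|V(D)|\cdot|N^+(v_i)\setminus\bigcup_{j<i}S_j|$ over the at most $|V(D)|$ choices of $z$ forces $|S_i| \ge \varepsilon^2|V(D)|/2$, hence $\ell \le 2/\varepsilon^2$; moreover $v_i$ is out‑complete to $S_i$ and $S_i$ is out‑complete to $\{z_i\}$. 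If some $\chi(D[S_i]) \ge c^*_{k-1}$ then, exactly as in \autoref{lem:rlrl} and using that $C$ has a source $s$ (it is not a directed cycle), I find a copy of the oriented tree $C-s$ in $D[S_i]$ and add the two arcs from $v_i$ to its ends, obtaining $C$.

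Otherwise every $\chi(D[S_i]) < c^*_{k-1}$. Set $X := V(D) \setminus \bigl(\{v_1,\dots,v_\ell\} \cup \bigcup_i S_i\bigr)$; then $\chi(D[X]) \ge \chi(D) - \ell\, c^*_{k-1}$ is still large, and maximality of the sequence gives that every $v \in X$ has more than $\varepsilon|V(D)|/2$ out‑neighbours in $\bigcup_i S_i$. Partitioning $X$ according to which $S_i$ captures the most out‑neighbours of $v$ and taking the class $X_{i^*}$ of largest chromatic number, I get $\chi(D[X_{i^*}])$ large while every vertex of $X_{i^*}$ has more than $(\varepsilon|V(D)|/2)/\ell \ge \varepsilon^3|V(D)|/4$ out‑neighbours in $S_{i^*}$, which is at least $2$ precisely because $|V(D)| \ge 48/\varepsilon^3$. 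After deleting $z_{i^*}$ from $X_{i^*}$ (so that it avoids what comes next), I locate in $D[X_{i^*}]$ a copy of the oriented tree $C - \{u_2,u_3,u_4\}$ — a directed path on $k-3$ vertices, a single vertex when $k=4$ — with ends $x,y$ playing $u_1,u_5$, and pick distinct $a \in N^+(x) \cap S_{i^*}$ and $b \in N^+(y) \cap S_{i^*}$. Since $a,b \in S_{i^*} \subseteq N^-(z_{i^*})$ we obtain the configuration $x \to a \to z_{i^*} \leftarrow b \leftarrow y$, so $a,z_{i^*},b$ play $u_2,u_3,u_4$ and, together with the path, give a copy of $C$; the vertices are pairwise distinct because $S_{i^*}$ is disjoint from $X_{i^*}$, $z_{i^*}$ was removed from $X_{i^*}$, and $z_{i^*}\notin N^-(z_{i^*})\supseteq S_{i^*}$.

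The step I expect to cause the most trouble is the very first one: because $u_3$ is a sink I am forced to cut every petal down by the extra intersection with $N^-(z_i)$, which shrinks petals from about $\varepsilon|V(D)|$ to about $\varepsilon^2|V(D)|$, worsens the bound on $\ell$ from $2/\varepsilon$ to $2/\varepsilon^2$, and is ultimately what accounts for the weaker constants ($48/\varepsilon^3$ and $16c^*_{k-1}/\varepsilon^2$ here, versus $12/\varepsilon^2$ and $4c^*_{k-1}/\varepsilon$ in \autoref{lem:rlrl}). One should also keep an eye on the degenerate $k=4$ cases, where $C-\{u_2,u_3,u_4\}$ is a single vertex and $x=y$, handled just as the analogous degeneracies are in the proof of \autoref{lem:rlrl}.
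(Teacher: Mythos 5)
Your proposal is correct, and it takes a genuinely different route from the paper's proof. The paper anchors its petals in \emph{in}-neighbourhoods: it builds a maximal sequence with $S_i \subseteq N^-(v_i)$ of size at least $\varepsilon^2|V(D)|/8$, so that $v_i$ itself plays the sink $u_3$ (in Case 1 via $C-u_3$, in Case 2 via $x \rightarrow x' \rightarrow v_i \leftarrow y' \leftarrow y$). Since the degree hypothesis is on out-degrees, maximality then controls the wrong quantity for the leftover set $X$, and the paper has to pay for this with a two-stage analysis: splitting $X$ into $X_A$ (many out-neighbours into $\bigcup_i(S_i\cup\{v_i\})$) and $X_B$, partitioning $X_A$, and finally a double count of arcs from $X_A$ into $\bigcup_i S_i$ to produce a vertex of large in-degree from $X_A$, contradicting maximality. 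You instead keep the petals inside out-neighbourhoods but attach an apex: by averaging over $z$ (using that every vertex of $N^+(v_i)$ has out-degree at least $\varepsilon|V(D)|$) you get $S_i \subseteq N^+(v_i)\cap N^-(z_i)$ of size at least $\varepsilon^2|V(D)|/2$, so the maximality condition is again an out-neighbourhood condition and the leftover argument is the simple one from \autoref{lem:rlrl}; Case 1 embeds $C-s$ for a source $s$ of $C$ (which exists since $C$ has the sink $u_3$) with $v_i$ playing $s$, and Case 2 lets $z_{i^*}$ play $u_3$ with two distinct out-neighbours $a,b\in S_{i^*}$ of the path ends playing $u_2,u_4$; distinctness is clean since $S_{i^*}$ is disjoint from $X$ and $z_{i^*}\notin S_{i^*}$ (no loops), and you rightly delete $z_{i^*}$ before finding the path. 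What each approach buys: yours is shorter and avoids the terminal contradiction argument entirely, at the cost of an extra factor of $\varepsilon$ in the petal size (so $\ell \leq 2/\varepsilon^2$), but your bookkeeping still fits the stated hypotheses with room to spare ($\chi(D[X_{i^*}]) \geq 7c^*_{k-1}$ and more than $\varepsilon^3|V(D)|/4 \geq 12$ out-neighbours in $S_{i^*}$); the paper's version needs no apex vertex and uses $v_i$ itself as the image of $u_3$, which is why its sequence must be in-neighbourhood-based. One trivial slip: $C-\{u_2,u_3,u_4\}$ is in general an \emph{oriented} path, not a directed path, but this is immaterial since $c^*_{k-3}$ guarantees every oriented tree of order $k-3$.
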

\begin{proof}
    First, let $u_1, u_2, u_3, u_4, u_5 \in V(C)$ be consecutive vertices along $C$ such that $u_1 \rightarrow u_2 \rightarrow u_3 \leftarrow u_4 \leftarrow u_5$, where $u_1 = u_5$ if $k = 4$. We aim to first find $C - \{u_2, u_3, u_4\}$ as a subdigraph in $D$, and subsequently extend said subdigraph to a subdigraph isomorphic to $C$. 

    Let $v_1, \ldots, v_{\ell}$ be a maximal sequence of vertices such that for all $i \in [\ell]$ it holds that
    \[\left| N^-(v_i) \setminus \bigcup_{j=1}^{i-1} N^-[v_j]\right| \geq \frac{\varepsilon^2 |V(D)|}{8}.\] Note that there at least exist one vertex of in-degree at least $\varepsilon |V(D)|$ by an averaging argument using the minimum degree condition on $D$. Since any sequence consisting of a single vertex satisfies the condition, the aforementioned maximal sequence exists. Moreover, for $i \in [\ell]$ let 
    \[S_i := N^-(v_i) \setminus \bigcup_{j=1}^{i-1} N^-[v_j].\]

    Suppose that there exists an index $i\in [\ell]$ such that $\chi(D[S_i]) \geq c^*_{k-1}$. Then, by the definition of $c^*_{k-1}$, the digraph $D[S_i]$ contains a subdigraph $P$ isomorphic to $C - u_3$. Then, for $x, y \in S_i$ the endpoints of path $P$, the path $P+xv_i + yv_i$  is a subdigraph of $D$ isomorphic to $C$, as desired. Hence, we may assume that $\chi(D[S_i]) < c^*_{k-1}$ for all $i \in [\ell]$.
    
    Next, by the definition of $v_i$, we have that $|S_i| \geq \varepsilon^2 |V(D)|/8$ for all $i \in [\ell]$. Note that as the sets $S_1, \ldots, S_{\ell}$ are disjoint, it follows that $\ell \leq \frac{|V(D)|}{\varepsilon^2 |V(D)|/8} = 8/\varepsilon^2$.
    
    Let $X:= V(G) \setminus \left( \bigcup_{i=1}^{\ell} S_i \cup \{v_i\}\right)$. By the sub-additivity of the chromatic number, and as $\ell \leq 8/\varepsilon^2$, it follows that 
    \[\chi(D[X]) \geq \chi(D) - \sum_{i = 1}^{\ell} \chi(D[S_i \cup \{v_i\}]) \geq 16c^*_{k-1}/\varepsilon^2 - \ell \cdot c^*_{k-1} \geq 8/\varepsilon^2 \cdot c^*_{k-3}.\] Moreover, by the maximality of $\ell$, each vertex $v \in X$ has strictly fewer than $\varepsilon^2 |V(D)|/8$ in-neighbours in $X$. 

    Let $X_A \subseteq X$ be the set of all vertices $v \in X$ such that $|N^+(v) \cap \bigcup_{i=1}^{\ell}(S_i \cup \{v_i\})| \geq \varepsilon |V(D)| / 2$, and let $X_B := X \setminus X_A$. Moreover, let $X_{A, 1}, \ldots, X_{A, \ell}$ be a partition of $X_A$ where $X_{A,i}$ for $i \in [\ell]$ is the set of vertices $u \in X_A$ for which $|N^+(u) \cap S_j|$ is maximized for $j = i$ (in the case that this value is maximized for multiple indices, we again assign the vertex arbitrarily to one of the corresponding sets). We obtain that, for every $v \in X_{A,i}$ for every $i \in [\ell]$,
    \[|N^+(v) \cap S_i| \geq \frac{\varepsilon |V(D)|/2 - \ell}{\ell} \geq \varepsilon^3 |V(D)| / 16 - 1.\]
    Because $|V(D)| \geq 48/\varepsilon^3$, this gives that $|N^+(v) \cap S_i| \geq 2$.

    Now suppose that there exists an index $i \in [\ell]$ such that $\chi(D[X_{A,i}]) \geq c^*_{k-3}$. Then, by the definition of $c^*_{k-3}$, the digraph $D$ contains a subdigraph $P$ isomorphic to $C - \{u_2, u_3, u_4\}$. Let $x, y \in X_{A,i}$ be the two endpoints of the path $P$. Note that if $k =4$, these endpoints are not distinct. Because $x, y \in X_{A, i}$, and thus $|N^+(x) \cap (S_i)|\geq 2$, there exist distinct out-neighbours $x'$ and $y'$ of $x$ and $y$, respectively, in $S_i$. Then $P + xx' + yy' + x'v_i + y'v_i$ is a subdigraph of $D$ isomorphic to $C$.

    Hence, we may assume that $\chi(D[X_{A,i}]) < c^*_{k-3} \leq c^*_{k-1}$ for all $i \in [\ell]$. Therefore,
    \[\chi(D[X_B]) \geq \chi(D[X]) - \sum_{i=1}^{\ell} \chi(D[X_{A,i}]) \geq 8/\varepsilon^2 \cdot c^*_{k-1} - \ell (c^*_{k-1} - 1) \geq 8/\varepsilon \geq 8.\]
    Thus, $X_B \neq \emptyset$. Note that for each vertex $v \in X_B$, since $v \not \in X_A$, it holds that $|N^+(v) \cap \bigcup_{i=1}^{\ell} (S_i \cup \{v_i\})|< \varepsilon |V(D)|/2$. Then, as each vertex has out-degree at least $\varepsilon |V(D)|$, it holds that $|N^+(v) \cap X| > \varepsilon |V(D)|/2$ for all $v \in X_B$.

    First suppose that $|N^+(v) \cap X_B| \geq \varepsilon |V(D)|/4$ for every vertex $v \in X_B$. Then there exists a vertex $v^* \in X_B$ such that $|N^-(v^*) \cap X_B| \geq \varepsilon |V(D)|/4$. However, since $X_B$ is disjoint from $S_1, \ldots, S_{\ell}$ and $\{v_1, \ldots, v_{\ell}\}$, and as $\varepsilon |V(D)|/4 \geq \varepsilon^2 |V(D)|/8$, the existence of such a vertex contradicts the maximality of $\ell$.

    Thus, there exists a vertex $v' \in X_B$ such that $|N^+(v') \cap X_B| < \varepsilon |V(D)|/4$. Then, as additionally $|N^+(v') \cap X| > \varepsilon |V(D)|/2$ as $v' \in X_B$, it follows that $|N^+(v') \cap X_A| > \varepsilon |V(D)|/4$ and therefore, $|X_A| > \varepsilon |V(D)|/4$. By the definition of $X_A$, there then exist more than $\varepsilon |V(D)|/4 \cdot \varepsilon |V(D)|/2 = \varepsilon^2 |V(D)|^2/8$ arcs from $X_A$ to $\bigcup_{i \in [\ell]} (S_i \cup \{v_i\})$. Moreover, since each in-neighbour of $v_i$ for $i \in [\ell]$ is contained in $\bigcup_{j = 1}^i (S_j \cup \{v_j\})$, in fact there exist more than $\varepsilon^2 |V(D)|^2/8$ arcs from $X_A$ to $\left(\bigcup_{i \in [\ell]} S_i\right) \setminus \{v_i \, : \, i \in [\ell]\}$. But then, as $\left| \bigcup_{i \in [\ell]} S_i \right| \leq |V(D)|$, there exists a vertex $v'' \in \left(\bigcup_{i \in [\ell]} S_i\right) \setminus \{v_i \, : \, i \in [\ell]\}$ such that $v''$ has at least $\varepsilon^2 |V(D)|/8$ in-neighbours in $X_A$. Setting $v_{\ell +1} := v''$ then contradicts the maximality of $\ell$. By this contradiction, we conclude that the result holds.
\end{proof}

Based on \autoref{lem:rlrl} and \autoref{lem:rrll}, we will now show that \autoref{thm:maintheorem} holds for orientations of cycles consisting of at least three blocks, and orientations of cycles consisting of two blocks, both of length at least two. We first introduce a useful definition we will use in the upcoming proofs. 

For a set $S \subseteq V(D)$ and an integer $r > 0$, let $N_r^-(S) \subseteq V(D)$ denote the set of vertices outside of $S$ that have at least $r$ out-neighbours in $S$. That is, $N_r^-(S) := \{v \in V(D) \setminus S \, : \, |N^+(v) \cap S| \geq r\}$. We then have the following definition:

\begin{definition}
    Let $D$ be a digraph and let $0 < c < 1$ be fixed. A set $X \subseteq V(D)$ is \emph{$(c, r)$-cohesive} if for all $v \in X$ it holds that $\chi(D[X \setminus  N_r^-(N^+(v))]) \leq c\cdot \chi(D[X])$.
\end{definition}

This definition allows us to quantify the intuition that there exist sets of vertices where most vertices have many common out-neighbours. The following lemma establishes that there exist $(c,r)$-cohesive sets in the digraphs we consider.

\begin{lemma}\label{lem:cohesive}
    Let $\varepsilon > 0$, $0 < c < 1$, and integers $m > 0$ and $r > 0$ be fixed. Let $\ell := \lceil 2/\varepsilon\rceil$ and let $D$ be a digraph with $|V(D)| \geq \ell^2\cdot(r-1)$, $\chi(D) \geq m/c^{\ell - 1}$ and minimum out-degree at least $\varepsilon |V(D)|$. Then $D$ contains a $(c,r)$-cohesive set $X\subseteq V(D)$ such that $\chi(D[X]) \geq m$.
\end{lemma}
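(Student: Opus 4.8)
The plan is to build $X$ by a greedy refinement, in the spirit of the maximal-sequence arguments in the proofs of \autoref{lem:rlrl} and \autoref{lem:rrll}. I would set $X_0 := V(D)$ and, as long as the current set $X_{i-1}$ is not $(c,r)$-cohesive, choose a vertex $v_i \in X_{i-1}$ witnessing this, that is, with $\chi(D[X_{i-1} \setminus N_r^-(N^+(v_i))]) > c \cdot \chi(D[X_{i-1}])$, and pass to $X_i := X_{i-1} \setminus N_r^-(N^+(v_i))$. Let $v_1, \dots, v_t$ be a maximal such sequence; by maximality $X_t$ is then $(c,r)$-cohesive, so it remains only to check that $\chi(D[X_t]) \geq m$. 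For this, the choice of the $v_i$ gives $\chi(D[X_i]) > c \cdot \chi(D[X_{i-1}])$ for each $i \in [t]$, hence $\chi(D[X_t]) > c^{t}\,\chi(D) \geq c^{t}\cdot m/c^{\ell-1}$; so if $t \leq \ell - 1$ then $c^{t} \geq c^{\ell-1}$ (as $0 < c < 1$) and $\chi(D[X_t]) > m$, while if $t = 0$ then $\chi(D[X_t]) = \chi(D) \geq m/c^{\ell-1} \geq m$. Everything thus reduces to the bound $t \leq \ell - 1$.

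To prove $t \leq \ell - 1$ I would show that $N^+(v_1), \dots, N^+(v_t)$ pairwise meet in fewer than $r$ vertices, and this is exactly where the choice $X_i := X_{i-1} \setminus N_r^-(N^+(v_i))$ is used. The sets $X_0 \supseteq X_1 \supseteq \cdots \supseteq X_t$ are nested, so for $j < i$ we have $v_i \in X_{i-1} \subseteq X_j$, while $X_j$ is by construction disjoint from $N_r^-(N^+(v_j))$; hence $v_i \notin N_r^-(N^+(v_j))$, i.e.\ $|N^+(v_i) \cap N^+(v_j)| < r$. Since moreover $|N^+(v_i)| \geq \varepsilon|V(D)| \geq r$ (by the minimum out-degree and the assumption $|V(D)| \geq \ell^2(r-1)$), the $v_i$ are distinct and all pairwise intersections of their out-neighbourhoods have size at most $r-1$. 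Now suppose $t \geq \ell$; then $N^+(v_1), \dots, N^+(v_\ell)$ are $\ell$ subsets of $V(D)$, each of size at least $\varepsilon|V(D)|$, with pairwise intersections of size at most $r-1$, so by inclusion--exclusion
\[
  |V(D)| \;\geq\; \Bigl| \bigcup_{i=1}^{\ell} N^+(v_i) \Bigr| \;\geq\; \ell\varepsilon|V(D)| - \binom{\ell}{2}(r-1) \;\geq\; 2|V(D)| - \frac{\ell^2}{2}(r-1) \;\geq\; \frac{3}{2}\,|V(D)|,
\]
using $\ell\varepsilon \geq 2$ (as $\ell = \lceil 2/\varepsilon \rceil$) and $\ell^2(r-1) \leq |V(D)|$. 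This contradicts $|V(D)| > 0$, so $t \leq \ell - 1$ and the proof is complete.

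The genuine content is in the second paragraph; the rest is bookkeeping. I expect the fiddly points to be checking that the greedy step is legitimately iterable (so that a maximal sequence exists and the nested sets $X_i$ behave as described) and being careful about the role of $N^+(v)$ itself inside $N_r^-(N^+(v))$ and inside the definition of $(c,r)$-cohesiveness, so that $v_i \notin N_r^-(N^+(v_j))$ really does force $|N^+(v_i) \cap N^+(v_j)| < r$. Conceptually, though, the whole trick is that peeling off $N_r^-(N^+(v_i))$ at each round --- rather than passing into $N^+(v_i)$ or into $N_r^-(N^+(v_i))$ --- turns ``too many refinement rounds'' into ``too many large, nearly disjoint out-neighbourhoods'', which the density hypotheses (out-degrees at least $\varepsilon|V(D)|$, and $|V(D)| \geq \ell^2(r-1)$) forbid.
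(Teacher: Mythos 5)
Your proposal is correct and follows essentially the same argument as the paper: iteratively peel off $N_r^-(N^+(v_i))$ using witnesses of non-cohesiveness, use nestedness to get $|N^+(v_i)\cap N^+(v_j)|\leq r-1$, and derive the length bound from the union of out-neighbourhoods via $\ell\varepsilon\geq 2$ and $|V(D)|\geq \ell^2(r-1)$. The only difference is presentational (you take a maximal sequence and bound its length directly, while the paper assumes no such cohesive set exists and runs the same computation to a contradiction), so there is nothing substantive to change.
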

\begin{proof}
	Suppose that $D$ does not contain such a vertex set. We define a sequence $X_1, \ldots, X_{\ell} \subseteq V(D)$ of vertex subsets and a corresponding sequence $v_1, \ldots, v_{\ell} \in V(D)$ of vertices as follows; let $X_1 := V(D)$, and let $v_1 \in X_1$ be a vertex witnessing that $X_1$ is not $(c,r)$-cohesive. Then, for $i \in [\ell]\setminus\{1\}$, iteratively define $X_i := X_{i-1}\setminus  N_r^-(N^+(v_{i-1}))$ and $v_i \in X_i$ a vertex that witnesses that $X_i$ is not $(c,r)$-cohesive. That is, for all $i \in [\ell]$, it holds that $\chi(D[X_i \setminus N_r^-(N^+(v_i))]) > c \cdot \chi(D[X_i])$.
	
	Moreover, by the definition of the sets $X_i$, it then follows for all $i \in [\ell]\setminus\{1\}$ that $\chi(D[X_i]) > c \cdot \chi(D[X_{i-1}])$. Thus, for all $i \in [\ell]$, we have that $\chi(D[X_i]) > c^{i-1} \chi(D[X_1]) = c^{i-1} \chi(D) \geq c^{\ell - 1} \chi(D) = m$. Hence, by our assumption that $D$ does not contain a $(c,r)$-cohesive set $X \subseteq V(D)$ with $\chi(D[X]) \geq m$, the sequences $X_1, \ldots , X_{\ell}$ and $v_1, \ldots , v_{\ell}$ are well-defined.
    
	We then observe that since $X_i\subsetneq X_{i-1}$ for all $i \in [\ell]\setminus\{1\}$, and as $X_{i} = X_{i-1} \setminus N_r^-(N^+(v_{i-1}))$, it holds that $X_j \subseteq X_{i} \setminus N_r^-(N^+(v_{i}))$ for all $1 \leq i < j \leq \ell$. Therefore, $v_j \not \in N_r^-(N^+(v_i))$ for all $1 \leq i < j \leq \ell$, and hence $|N^+(v_i) \cap N^+(v_j)| \leq r-1$. Since $D$ has minimum out-degree at least $\varepsilon |V(D)|$, it follows that 
    \[|N^+(\{v_1, \ldots, v_{\ell}\})| > \ell \cdot \varepsilon |V(D)| - \ell^2\cdot (r-1) \geq \lceil 2/\varepsilon \rceil \cdot \varepsilon |V(D)|- \ell^2\cdot (r-1) \geq 2|V(D)| - \ell^2\cdot (r-1).\] 
    Then, as $|V(D)| \geq \ell^2\cdot (r-1)$, we obtain that $|N^+(\{v_1, \ldots, v_{\ell}\})| > |V(D)|$, a contradiction. Hence, we conclude that $D$ must contain a $(c,r)$-cohesive set $X\subseteq V(D)$ with $\chi(D[X]) \geq m$, as desired.
\end{proof}

We now use this lemma to lift \autoref{lem:rlrl} and \autoref{lem:rrll} to a general result for all orientations of $k$-cycles containing at least three blocks.

\begin{lemma}\label{lem:threeblocks}
    Let $0 < \varepsilon < 1/2$ and an integer $k \geq 4$ be fixed. Let $D$ be a digraph with $|V(D)| \geq 48k/\varepsilon^3$, $\chi(D) \geq 16c_{k-1}^*/(\varepsilon^{\lceil 2/\varepsilon \rceil} (1-2\varepsilon))$ and minimum out-degree at least $\varepsilon|V(D)|$, and let $C$ be an orientation of a $k$-cycle consisting of more than two blocks. Then $D$ contains a subdigraph isomorphic to $C$.
\end{lemma}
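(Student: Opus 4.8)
The plan is to reduce the general case of an orientation $C$ with at least three blocks to the two base cases already handled, namely Lemmas~\ref{lem:rlrl} and~\ref{lem:rrll}. The key structural observation is that an orientation of a cycle with at least three blocks — equivalently, at least four blocks, since the number of blocks is even — must contain one of the two patterns $\rightarrow\leftarrow\rightarrow\leftarrow$ or $\rightarrow\rightarrow\leftarrow\leftarrow$ as a subdigraph: walking around $C$ we see at least four maximal directed subpaths, and at every ``valley'' ($\rightarrow\leftarrow$) and every ``peak'' ($\leftarrow\rightarrow$) we can read off four consecutive arcs whose orientation pattern is one of these two (if both neighbouring blocks have length $\geq 2$ we get $\rightarrow\rightarrow\leftarrow\leftarrow$ around a peak, and if some block has length $1$ we get $\rightarrow\leftarrow\rightarrow\leftarrow$). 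So $C$ always contains a $4$-vertex path $P_0$ of one of the two prescribed types. Unfortunately Lemmas~\ref{lem:rlrl} and~\ref{lem:rrll} only directly give us an orientation of a $k$-cycle that happens to contain such a subpath, which is exactly $C$; so at first glance it looks like we are done — but the subtlety is that the constants in those two lemmas are stated for the specific cycle $C$, and what we actually want is a cleaner uniform statement with the $(c,r)$-cohesive machinery of Lemma~\ref{lem:cohesive}, presumably to get constants that will also serve the later (two-block) cases.

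So the real plan is: first, apply Lemma~\ref{lem:cohesive} with appropriately chosen parameters — take $r$ to be something like $2$ or a small function of $k$, take $c = 2\varepsilon$ (or $c$ slightly larger than $2\varepsilon$; this is why we need $\varepsilon < 1/2$), and take $m = 16c^*_{k-1}/\varepsilon$ or similar — to extract a $(c,r)$-cohesive set $X \subseteq V(D)$ with $\chi(D[X]) \geq m$ large. The size and chromatic-number hypotheses on $D$ in the lemma statement are exactly what is needed so that the hypotheses of Lemma~\ref{lem:cohesive} are met (the $\varepsilon^{\lceil 2/\varepsilon\rceil}$ factor in the chromatic bound is the $c^{\ell-1}$ term with $c \approx 2\varepsilon$ and $\ell = \lceil 2/\varepsilon\rceil$, and $48k/\varepsilon^3$ dominates $\ell^2(r-1)$). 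Second, work inside $D[X]$: the cohesiveness says that for every $v \in X$, deleting the vertices with many out-neighbours in $N^+(v)$ drops the chromatic number by a factor $c$, so the set of vertices that \emph{do} have $\geq r$ common out-neighbours with $v$'s out-neighbourhood still carries most of the chromatic number. Iterating this, or using it directly, one finds inside $X$ a structure resembling the $v_i, S_i$ decomposition in the proofs of Lemmas~\ref{lem:rlrl}/\ref{lem:rrll}, but now with the built-in guarantee that chromatic number is preserved; then Burr's theorem (Theorem~\ref{thm:existence}) applied to a high-chromatic leftover finds the oriented \emph{tree} $C - u$ (a path, or $C$ with one internal vertex of a block deleted) as a subdigraph, and the cohesiveness supplies the $r \geq 2$ common out-neighbours needed to close it up into $C$.

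Concretely, I would fix five consecutive vertices $u_1 \to u_2 \,?\, u_3 \,?\, u_4 \leftarrow u_5$ on $C$ realizing the chosen $4$-arc pattern, aim to embed $C - u_3$ (an oriented path / tree on $k-1$ vertices) or $C - \{u_2,u_3,u_4\}$ (on $k-3$ vertices) into a high-chromatic induced subdigraph of $X$ via Theorem~\ref{thm:existence}, and then use the cohesive property to find the missing one or three vertices as common neighbours: if we embedded $C - u_3$ with the two neighbours of $u_3$ landing on vertices $x,y$, cohesiveness (with $r$ chosen $\geq 2$, applied to control $N_r^-(N^+(x))$ and likewise for $y$) guarantees a common out-neighbour to play the role of $u_3$; and similarly for the three-vertex case we first find one cohesive ``hub'' vertex $v$, embed $C - \{u_2,u_3,u_4\}$ in the high-chromatic part of $X$ that shares out-neighbours with $v$, then pick the two needed vertices in $N^+(x)\cap N^+(v)$ and $N^+(y)\cap N^+(v)$ (forcing $r \geq 3$ there so they can be chosen distinct and distinct from each other). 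The bookkeeping of which pattern forces which deletion — and correspondingly whether we need $r = 2$ or $r = 3$, and whether we lose $c^*_{k-1}$ or $c^*_{k-3}$ in Burr's bound — is the routine part; the main obstacle is getting the parameters in Lemma~\ref{lem:cohesive} to line up so that after the at most $\ell$ chromatic-number halvings we still have $\chi \geq c^*_{k-1}$ left for Burr's theorem, which is precisely why the chromatic hypothesis carries the $\varepsilon^{\lceil 2/\varepsilon\rceil}$ and $(1-2\varepsilon)^{-1}$ factors, and why $k$ (rather than a constant) appears in the size bound — we may need up to $\Theta(k)$ common neighbours available to build up all the blocks robustly. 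I expect the delicate point to be making the cohesive decomposition actually produce, for a single well-chosen hub vertex, both a high-chromatic remainder \emph{and} enough common out-neighbours with every vertex of that remainder simultaneously; handling this cleanly is what the $(c,r)$-cohesive definition is engineered for, and the proof will lean on it rather than re-deriving the $v_i, S_i$ argument from scratch.
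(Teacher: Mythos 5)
Your reduction rests on a false structural claim: it is not true that every orientation of a cycle with at least four blocks contains $\rightarrow\leftarrow\rightarrow\leftarrow$ or $\rightarrow\rightarrow\leftarrow\leftarrow$ as a subdigraph. The pattern $\rightarrow\leftarrow\rightarrow\leftarrow$ appears only when two blocks of length exactly one are adjacent, and $\rightarrow\rightarrow\leftarrow\leftarrow$ only when two blocks of length at least two meet at a common sink; so any orientation whose blocks alternate between length exactly one and length at least two avoids both. A concrete counterexample is the $6$-cycle with arcs $u_1u_2, u_2u_3, u_4u_3, u_4u_5, u_5u_6, u_1u_6$ (block lengths $2,1,2,1$). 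This alternating family is precisely the case the lemma still has to cover after invoking \autoref{lem:rlrl} and \autoref{lem:rrll}, and your proposal never engages with it: your ``concrete'' plan again fixes five consecutive vertices realizing one of the two forbidden patterns, which such a $C$ does not possess. (Your secondary worry about constants is also misplaced: the hypotheses of \autoref{lem:threeblocks} are stronger than those of \autoref{lem:rlrl} and \autoref{lem:rrll}, so the cases those lemmas cover are immediately done; the only work is the alternating case.)

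For that remaining case the closure structure is also different from the single-hub scheme you sketch. One does not delete $u_3$ or $\{u_2,u_3,u_4\}$ and reattach through one hub $v$ with $N^+(x)\cap N^+(v)$; instead the paper applies \autoref{lem:cohesive} (with $c=\varepsilon$, $r=k+1$) to get a cohesive set $S$, embeds the long first block minus its last vertex as a directed path $P=v_1\rightarrow\cdots\rightarrow v_{\ell-1}$ inside $S$, and then uses cohesiveness at \emph{both} endpoints of $P$: passing to $S' \subseteq S$, still of chromatic number greater than $c^*_{k-1}$, in which every vertex has at least $k+1$ out-neighbours in each of $N^+(v_1)$ and $N^+(v_{\ell-1})$, it embeds $C-(\{u_1,\ldots,u_\ell\}\cup\{u_k\})$ in $D[S']$ via Burr's theorem and closes up with two distinct vertices $y'\in N^+(y)\cap N^+(v_{\ell-1})$ and $z'\in N^+(z)\cap N^+(v_1)$ avoiding $V(P)$ (this is why $r=k+1$, not $2$ or $3$). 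So the missing vertices are the sink of the long block and the vertex $u_k$, attached through the two ends of the embedded path, not through a single well-chosen hub; without this two-endpoint version of the cohesiveness argument your plan does not go through.
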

\begin{proof}
    Let $V(C) = \{u_1, \ldots , u_k\}$ such that $u_1, \ldots, u_k$ is a cyclic ordering of the underlying undirected graph. By \autoref{lem:rlrl} and \autoref{lem:rrll} we may assume that $k \geq 5$ and $C$ does not contain $\rightarrow\leftarrow\rightarrow\leftarrow$ or $\rightarrow\rightarrow\leftarrow\leftarrow$ as a subdigraph respectively.

    Since $C$ consists of more than two blocks, it consists of at least four blocks by parity. Let $B_1, \ldots, B_r$ with $r \geq 4$ and $r$ even be the blocks of $C$ ordered as they appear along the cycle according to the ordering $u_1, \ldots, u_k$. For the sake of convenience, relabel the vertices $u_1, \ldots, u_k$ such that $B_1$ is given by the directed path $u_1 \rightarrow u_2 \rightarrow \ldots \rightarrow u_{\ell}$ where $\ell - 1$ is the length of block $B_1$. As $C$ does not contain $\rightarrow\leftarrow\rightarrow\leftarrow$ as a subdigraph and as $k \geq 5$, at least one of the blocks of $C$ has length greater than one. Without loss of generality, suppose that $B_1$ has length at least two. Then, as $C$ does not contain $\rightarrow\rightarrow\leftarrow\leftarrow$ as a subdigraph, it follows that $B_2$ has length exactly one. Similarly, since $C$ does not contain $\rightarrow\leftarrow\rightarrow\leftarrow$, block $B_3$ has length at least two. It follows that all blocks $B_i$ with $i \in [r]$ odd have length at least $2$, and all blocks $B_i$ with $i \in [r]$ even have length exactly $1$. 

    By \autoref{lem:cohesive}, applied with $c = \varepsilon$, $r = k+1$, and $m = 16c_{k-1}^*/(1-2\varepsilon)$, digraph $D$ contains an $(\varepsilon, k+1)$-cohesive set $S$ with $\chi(D[S]) \geq 16c^*_{k-1} / (1-2\varepsilon)$. Since $\chi(D[S]) \geq c^*_{k-1}$, there exists a directed path $P = v_1 \rightarrow \ldots \rightarrow v_{\ell-1}$ in $D[S]$. We will use this path as the first building block of a copy of $C$, corresponding to the first $\ell -1$ vertices in $B_1$. To attach the rest of the desired cycle, consider the vertices $v_1$ and $v_{\ell - 1}$. Let $X_1 := N^+_D(v_1) \setminus V(P)$ and let $X_{\ell - 1} := N^+_D(v_{\ell-1}) \setminus V(P)$. As $|V(D)| > k/\varepsilon$, we have that $|V(P)| < \ell < k < \varepsilon |V(D)| \leq \deg^+_D(v_j)$ for $j \in \{1, \ell-1\}$, and hence $X_1$ and $X_{\ell - 1}$ are non-empty.

    Next, let $Y_1 := S \setminus N_{k+1}^-(N^+(v_1))$ and $Y_{\ell-1} := S \setminus N_{k+1}^-(N^+(v_{\ell -1}))$, and let $S' = S\setminus (Y_1 \cup Y_{\ell -1} \cup V(P))$. We then observe that as $S$ is $(\varepsilon, k+1)$-cohesive, it follows that 
    \[\chi(D[S']) \geq (1 - 2\varepsilon)\chi(D[S]) - |V(P)| > 16c^*_{k-1} - k.\]
    Since $c^*_k \geq 2k$ as shown by Burr~\cite{burr1980subtrees}, we obtain that $\chi(D[S']) > c^*_{k-1}$. Moreover, $S' \subseteq N_{k+1}^-(N^+(v_1)) \cap N_{k+1}^-(N^+(v_{\ell -1}))$, and thus each vertex in $S'$ has at least $k+1$ out-neighbours in both $N^+(v_1)$ and $N^+(v_{\ell})$. Then, since $|V(P)| < k$, each vertex in $S'$ has at least two out-neighbours in both $X_1$ and $X_{\ell-1}$.

    In $D[S']$, as $\chi(D[S']) > c^*_{k-1} \geq c^*_{k - \ell - 1}$, there exists a copy of the digraph $C - (\{u_1, \ldots, u_{\ell}\} \cup \{u_k\})$; let $Q$ be this subdigraph. Note that the underlying undirected graph is a path. Let $y$ be the endpoint of said path corresponding to vertex $u_{\ell + 1}$ and let $z$ be the endpoint corresponding to $u_{k-1}$.  

    Note that as $y, z \in V(Q) \subseteq S'$, both have at least two out-neighbours in both $X_1$ and $X_{\ell -1}$. Let $y'$ be an out-neighbour of $y$ in $X_{\ell -1}$, and let $z'$ be an out-neighbour of $z$ in $X_1 \setminus \{y'\}$. Then $P$, $y'$, $Q$, and $z'$ form a copy of $C$ in digraph $D$, where $P \cup \{y'\}$ corresponds to $B_1$, the blocks $B_2, \ldots, B_{r-1}$ are all contained in $Q + yy' + zz'$, and $B_r$ is given by $z' \leftarrow v_1$. Observe here that $B_r$ is a block of length one due to parity.
\end{proof}

Note that for the application of \autoref{lem:threeblocks} towards proving \autoref{thm:maintheorem}, the restriction that $\varepsilon < 1/2$ is not of significance, since smaller values of $\varepsilon$ lead to strictly stronger statements.

Moreover, we can use the previous result on orientations of cycles containing $\rightarrow\rightarrow\leftarrow\leftarrow$ to show the following;

\begin{lemma}\label{lem:twoblockseachlengthtwo}
    Let $\varepsilon > 0$ and an integer $k \geq 4$ be fixed. Let $D$ be a digraph with $|V(D)| \geq 48/\varepsilon^3$, $\chi(D) \geq 16c_{k-1}^* / \varepsilon^2$ and minimum out-degree at least $\varepsilon |V(D)|$, and let $C$ be an orientation of a $k$-cycle consisting of two blocks, each of length at least $2$. Then $D$ contains a subdigraph isomorphic to $C$.
\end{lemma}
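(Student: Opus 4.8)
The plan is to reduce directly to \autoref{lem:rrll}. The hypotheses imposed on $D$ in the present lemma ($|V(D)| \geq 48/\varepsilon^3$, $\chi(D) \geq 16c_{k-1}^*/\varepsilon^2$, minimum out-degree at least $\varepsilon|V(D)|$) are exactly those of \autoref{lem:rrll}, so it suffices to show that every orientation $C$ of a $k$-cycle consisting of exactly two blocks, both of length at least $2$, either contains $\rightarrow\rightarrow\leftarrow\leftarrow$ as a subdigraph (when $k \geq 5$) or is the orientation of a $4$-cycle obtained by identifying the endpoints of $\rightarrow\rightarrow\leftarrow\leftarrow$ (when $k = 4$). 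Then $C$ falls under \autoref{lem:rrll}, and we are done.

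To establish this, I would first describe the structure of $C$. Since $C$ has exactly two blocks, it has a unique vertex $t$ both of whose incident arcs point towards it and a unique vertex $s$ both of whose incident arcs point away from it, and the underlying cycle splits into two internally vertex-disjoint directed paths $P_1, P_2$ from $s$ to $t$, of lengths $a$ and $b$ with $a + b = k$ and $a, b \geq 2$. If $k = 4$ then $a = b = 2$, so $C$ is $s \rightarrow p \rightarrow t \leftarrow q \leftarrow s$, which is precisely the orientation of a $4$-cycle obtained by identifying the endpoints of $\rightarrow\rightarrow\leftarrow\leftarrow$; \autoref{lem:rrll} with $k = 4$ then applies and gives $C$ as a subdigraph of $D$.

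For $k \geq 5$, since $a, b \geq 2$ and $a + b \geq 5$, at least one of the two paths, say $P_1$, has length at least $3$; relabel if necessary so this is $P_1$. Let $q$ be the in-neighbour of $t$ on $P_1$ and $p$ the in-neighbour of $q$ on $P_1$, so $p \rightarrow q \rightarrow t$ is a subpath of $C$ and $p \neq s$ because $P_1$ has length at least $3$; let $w$ be the in-neighbour of $t$ on $P_2$ and $r$ the in-neighbour of $w$ on $P_2$, so $r \rightarrow w \rightarrow t$ is a subpath of $C$. I would then check that $p, q, t, w, r$ are five distinct vertices: $p, q \in V(P_1) \setminus \{t\}$ and $r, w \in V(P_2)\setminus\{t\}$, and these sets meet only in $s$; moreover $q \neq s$ (as $q \rightarrow t$ is an arc), $p \neq s$ by the length bound, and $t$ is distinct from the other four, so the only possible coincidence would be between $\{p,q\}$ and $\{r,w\}$, which is impossible, while $r = s$ does no harm. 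Hence $p \rightarrow q \rightarrow t \leftarrow w \leftarrow r$ is a copy of $\rightarrow\rightarrow\leftarrow\leftarrow$ inside $C$, and \autoref{lem:rrll} (the case $k \geq 5$) yields the desired copy of $C$ in $D$. There is no real obstacle here; the only care needed is the distinctness bookkeeping for these five vertices and the separate treatment of the $k = 4$ boundary case, both handled above.
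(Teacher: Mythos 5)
Your proposal is correct and matches the paper's argument, which likewise disposes of this lemma by observing that a cycle with two blocks of length at least $2$ contains $\rightarrow\rightarrow\leftarrow\leftarrow$ (or, for $k=4$, is exactly the identified-endpoints orientation) and invoking \autoref{lem:rrll} under identical hypotheses. Your write-up simply makes explicit the distinctness bookkeeping that the paper leaves to the reader.
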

\begin{proof}
    The result follows directly from \autoref{lem:rrll} and the fact that each block has length at least $2$.
\end{proof}

\section{Constructions for directed cycles and orientations of cycles with a single arc flipped}\label{section:counterexamples}
In this section, we provide constructions of digraphs with arbitrarily large chromatic number and minimum out-degree that do not contain a specified orientation of a fixed-length cycle as a subdigraph. 
\begin{theorem}\label{thm:counterexample-directed}
    For each integer $k \geq 2$ there exists an $\varepsilon > 0$ such that for all integers $n$ and $c$, there exists a digraph $D$ with $|V(D)| \geq n$, $\chi(D) \geq c$, and minimum out-degree at least $\varepsilon |V(D)|$, such that $D$ does not contain a directed $k$-cycle as a subdigraph.
\end{theorem}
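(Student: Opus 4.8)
The plan is to build $D$ by stacking $k+1$ cyclically-arranged copies of the directed shift graph and joining consecutive copies by complete bipartite arc sets: the copies carry the chromatic number, the complete bipartite joins make the out-degrees linear, and the cyclic arrangement forces every directed cycle to be longer than $k$. Fix $M:=k+1$ and $\varepsilon := 1/M$. Recall the directed shift graph $\vec{S}_N$ on $[N]$: its vertices are the pairs $(i,j)$ with $1\le i<j\le N$, its arcs are $(i,j)\to(j,\ell)$ for $i<j<\ell$, and its underlying undirected graph is the shift graph $S_N$, which satisfies $\chi(S_N)=\lceil\log_2 N\rceil$ (for the lower bound $\chi(S_N)\ge\log_2 N$, note that from a proper colouring $\mathrm{col}$ the sets $A_i:=\{\,\mathrm{col}(\{i,j\}):j>i\,\}$, $1\le i\le N-1$, are distinct and nonempty, since otherwise $\mathrm{col}(\{i,i'\})\in A_i$ would have to recur as $\mathrm{col}(\{i',\ell\})$ for some $\ell>i'$, with $\{i,i'\}$ and $\{i',\ell\}$ adjacent). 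For a large integer $N$ fixed below, put
\[
V(D):=V(\vec{S}_N)\times\mathbb{Z}_M,
\]
and let $E(D)$ consist of all \emph{intra-arcs} $\big((i,j),r\big)\to\big((j,\ell),r\big)$ with $i<j<\ell$, $r\in\mathbb{Z}_M$, together with all \emph{inter-arcs} $(P,r)\to(Q,r+1)$ with $P,Q\in V(\vec{S}_N)$, $r\in\mathbb{Z}_M$. Since $M\ge 3$, inter-arcs join only consecutive copies and only forward, so $D$ is loopless with no parallel or anti-parallel arcs.

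Next I would verify that $D$ has no directed $k$-cycle. First, $\vec{S}_N$ is acyclic because the larger coordinate strictly increases along each of its arcs. Now let $\Phi\colon V(D)\to\mathbb{Z}_M$, $(P,r)\mapsto r$; an intra-arc leaves $\Phi$ unchanged and an inter-arc increases it by $1$. Hence along any directed cycle $C$ of $D$ the number of inter-arcs used is a multiple of $M$; if it were $0$, then $C$ would lie inside a single copy of $\vec{S}_N$, contradicting acyclicity, so $C$ uses at least $M$ inter-arcs and therefore $|V(C)|\ge M=k+1>k$. Thus $D$ has no directed $k$-cycle.

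Finally I would check the quantitative requirements. Every vertex $(P,r)$ is out-adjacent to all $\binom{N}{2}$ vertices of copy $r+1$, so $\deg^+_D\big((P,r)\big)\ge\binom{N}{2}=|V(D)|/M=\varepsilon\,|V(D)|$. Moreover $D$ contains the induced subdigraph on $V(\vec{S}_N)\times\{0\}$, whose underlying graph is $S_N$, so $\chi(D)\ge\chi(S_N)=\lceil\log_2 N\rceil$. Choosing $N:=\max\{\,2^c,\ \lceil\sqrt{2n}\,\rceil,\ 2\,\}$ then gives $\chi(D)\ge c$ and $|V(D)|=M\binom{N}{2}\ge n$, completing the construction.

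The main obstacle is keeping $\chi(D)$ large while the out-degrees grow linearly. The naive alternative — a ``modular layering'' in which \emph{every} arc advances a $\mathbb{Z}_M$-coordinate — fails badly: such a digraph admits a graph homomorphism onto the cycle $C_M$, hence has chromatic number at most $3$. The construction above circumvents this by retaining an entire acyclic, high-chromatic copy of $\vec{S}_N$ inside each level and only adding complete bipartite arc sets \emph{between} consecutive levels; the chromatic number then survives simply because one full copy of $\vec{S}_N$ sits inside $D$, while the level-counting argument still forces all directed cycles to have length $>k$. The only remaining routine points are that adding all inter-level arcs creates neither parallel/anti-parallel arcs (as $M=k+1\ge 3$) nor a short directed cycle (by the $\Phi$-argument).
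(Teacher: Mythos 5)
Your construction is correct and is essentially the paper's: both are blow-ups of a directed $(k+1)$-cycle in which each of the $k+1$ classes induces an acyclic digraph of large chromatic number, all arcs between consecutive classes go forward, and the mod-$(k+1)$ level count forces every directed cycle to have length at least $k+1$. The paper simply places a transitive tournament on $\ell$ vertices in each class instead of a copy of the shift graph, which yields chromatic number $\ell$ (linear rather than logarithmic in the class size) and dispenses with the shift-graph colouring argument.
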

\begin{proof}
    Let $\varepsilon < 1/(k+1)$, and let $\ell = \max(n/(k+1), c)$. Consider the digraph $D$ obtained from a directed $(k+1)$-cycle by blowing up each vertex into a transitive tournament on $\ell$ vertices. Hence, digraph $D$ satisfies $|V(D)| \geq (k+1)\cdot \ell \geq n$, $\chi(D) \geq \ell \geq c$, and minimum out-degree at least $\ell = |V(D)|/(k+1) > \varepsilon |V(D)|$. Moreover, we observe that this digraph does not contain a directed $k$-cycle as a subdigraph. Thus, digraph $D$ satisfies all the desired conditions. 
\end{proof}

Next, we similarly provide a construction for cycles containing exactly two blocks, one of which has length one. Observe that such cycles can be obtained from a directed cycle by flipping the orientation of a single arc. We first prove an auxiliary lemma we will need for the construction. We say a digraph $D'$ is obtained from a digraph $D$ by \emph{cloning a vertex $v \in V(D)$} if $D'$ may be constructed from $D$ by adding a new vertex $v'$ with adjacency $N^-(v') := N^-(v)$ and $N^+(v'):= N^+(v)$. We say that $v'$ is a \emph{clone} of $v$.

For $k \geq 3$, let $\mathcal{C}_k$ be the family of all directed cycles of length at most $k$, and let $\mathcal{C}_k'$ be the family of all orientations of cycles of length at most $k$ containing exactly two blocks, of which one has length one.

\begin{lemma}\label{lem:cloning}
     For $k \geq 3$ fixed, let $D$ be a digraph that does not contain a digraph in $\mathcal{C}_k \cup \mathcal{C}_k'$ as a subdigraph. Then the digraph $D'$ resulting from cloning a vertex $v \in V(D)$ does not contain any digraph in $\mathcal{C}_k \cup \mathcal{C}_k'$ as a subdigraph either.
\end{lemma}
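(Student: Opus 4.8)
The plan is to argue by contradiction: suppose $D'$ contains some $H \in \mathcal{C}_k \cup \mathcal{C}_k'$ as a subdigraph, and derive a copy of a (possibly different) member of $\mathcal{C}_k \cup \mathcal{C}_k'$ already in $D$. Write $v'$ for the clone of $v$. If the copy of $H$ avoids $v'$ entirely, it lives in $D$ and we are immediately done, so we may assume $v' \in V(H)$. Since $v$ and $v'$ have the same in- and out-neighbourhoods in $D'$ and $vv', v'v \notin E(D')$ (cloning adds no arc between $v$ and $v'$), the only obstruction to simply replacing $v'$ by $v$ in the copy of $H$ is that $v$ might also already be used by $H$. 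So the real case to handle is: $H$ uses \emph{both} $v$ and $v'$.

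In that case $v$ and $v'$ are two distinct vertices of the cycle $H$ (of length at most $k$) that are non-adjacent in $H$ (as there is no arc between them in $D'$), and they have identical neighbourhoods. The cycle $H$ is then split by $\{v,v'\}$ into two arcs (paths) $P_1$ and $P_2$, each with endpoints $v$ and $v'$; since $v,v'$ are non-adjacent in $H$, each $P_i$ has length at least $2$. I would now take the shorter of the two, say $P_1$, and close it into a cycle $H^\star$ in $D$ by identifying its endpoints: replace the endpoint $v'$ of $P_1$ by $v$. This is legal in $D$ precisely because the neighbour of $v'$ along $P_1$ is, by the cloning definition, also a neighbour of $v$ in $D$ with the same orientation; and the resulting closed walk is a genuine cycle because $P_1$ is an induced-enough subpath (its internal vertices are distinct from $v$ and from each other, being vertices of the cycle $H$). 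The length of $H^\star$ equals the length of $P_1$, which is at least $2$ — we should double-check it is at least $3$, i.e.\ that $P_1$ is not a single arc-pair; but $P_1$ has length $\geq 2$ and, since $H$ is a cycle (no parallel arcs, no digons among $\{v,v',\text{neighbour}\}$ unless $H$ itself is a digon, which is excluded as $k \geq 3$), $H^\star$ has length between $3$ and $k$.

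The last point is to check that the \emph{orientation type} of $H^\star$ still lies in $\mathcal{C}_k \cup \mathcal{C}_k'$. Here I would use that the orientation pattern of $P_1$ as a subpath of $H$ is inherited, and when we glue its two endpoints together the two edges incident to the identified vertex $v$ are exactly the two edges of $H$ that were incident to $v$ and $v'$ along $P_1$. Since $H$ is either a directed cycle or a cycle with exactly two blocks one of which has length one, every sub-arc $P_i$ is a union of directed subpaths with at most the same block structure; closing up $P_1$ can only merge or preserve blocks, so $H^\star$ has at most two blocks, and if it has two blocks the short one still has length one (it cannot grow, since no new $\rightarrow\leftarrow$ pattern is created by the identification — the two glued edges had the same orientation relative to $v'$ resp. $v$, which were neighbourhood-identical). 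Hence $H^\star \in \mathcal{C}_k \cup \mathcal{C}_k'$, contradicting the hypothesis on $D$.

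The main obstacle I anticipate is precisely this last bookkeeping step — verifying that identifying the two endpoints of $P_1$ cannot accidentally produce a cycle with three or more blocks (which would \emph{not} be forbidden in $D$, breaking the argument). This needs a careful case analysis of the orientations of the two $H$-edges incident to $v$ and $v'$, using that those two edges go to neighbours lying in the common neighbourhood $N^-_{D}(v)=N^-_{D'}(v')$ or $N^+_{D}(v)=N^+_{D'}(v')$; the symmetry of the clone is what forces the glued configuration to be block-compatible. A secondary, more routine check is confirming $3 \le |H^\star| \le k$ and that $H^\star$ is a bona fide cycle (no repeated vertices, no parallel arcs) in $D$.
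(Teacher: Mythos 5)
There is a genuine gap, and it sits exactly where you flag your ``main obstacle.'' Your choice of which path to close up is the problem: you take the \emph{shorter} of $P_1,P_2$, but that path may fail to be a directed path, and then the construction can collapse entirely. Concretely, let $H\in\mathcal{C}_k'$ have long block $w_1\rightarrow w_2\rightarrow\dots\rightarrow w_\ell$ and short block $w_1\rightarrow w_\ell$, and take $v=w_1$, $v'=w_{\ell-1}$ (non-adjacent in $H$ once $\ell\geq 4$, and nothing prevents them from being the clone pair, since clones only need equal neighbourhoods in $D'$, not in $H$). The shorter path is $v\rightarrow w_\ell\leftarrow v'$, of length $2$; identifying $v$ with $v'$ sends both of its arcs to the single arc $vw_\ell$ of $D$, so you obtain one arc, not a cycle, and no contradiction. (A related slip: when the shorter path is a directed path of length $2$, identification gives a directed $2$-cycle, which is fine since digons are allowed and lie in $\mathcal{C}_k$, contrary to your claim that the resulting length is at least $3$.) Beyond this degenerate case, your block-bookkeeping claim (``closing up can only merge or preserve blocks, the short block cannot grow'') is asserted rather than proved, and you yourself leave it as the anticipated obstacle, so the argument is incomplete as written.

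The paper's proof avoids all of this with one observation you could adopt: at least one of $P_1,P_2$ is a \emph{directed} path --- if $H$ is a directed cycle both are, and if $H\in\mathcal{C}_k'$ the length-one block is a single arc, so it lies in only one of the two paths and the other consists solely of arcs of the long block. Choosing that directed path (it has length at least $2$ because $v,v'$ are non-adjacent) and identifying its endpoints yields a directed cycle of length at most $|V(H)|-1\leq k$, hence a member of $\mathcal{C}_k$ inside $D$, with no block analysis and no degenerate case. So your decomposition and cloning-symmetry setup match the paper, but the selection rule ``shorter path'' must be replaced by ``directed path'' for the proof to go through.
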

\begin{proof}
    Let $D'$ be the digraph resulting from cloning a vertex $v \in V(D)$. Let $v'$ be the clone of $v$ in $D'$. Suppose that $D'$ does contain a subdigraph $H$ isomorphic to a digraph $C \in \mathcal{C}_k \cup \mathcal{C}_k'$. If $H$ does not contain both $v$ and $v'$, we may assume without loss of generality that $v \in V(H)$. But then $H$ is also a subdigraph of $D$, a contradiction.  Hence we may assume that $v, v' \in V(H)$. 

    Let $P_1$ and $P_2$ be the two subdigraphs of $H$ such that $P_1$ and $P_2$ are both orientations of paths, $H = P_1 \cup P_2$, and $V(P_1) \cap V(P_2) = \{v, v'\}$. We claim that at least one of $P_1$ and $P_2$ is a directed path. If $H$ is a directed cycle, both $P_1$ and $P_2$ are directed paths. Otherwise, if $H$ is isomorphic to a digraph in $\mathcal{C}_k'$ and thus contains two blocks, one of length one, either $P_1$ or $P_2$ is a subdigraph of one of the blocks of $H$. Hence, either $P_1$ or $P_2$ is a directed path. Without loss of generality, assume that $P_1$ is a directed path. Because $v$ and $v'$ are clones and thus are non-adjacent, $P_1$ has length at least two. But then, we may identify endpoints $v$ and $v'$ in $P_1$ to obtain a subdigraph $P_1'$ of $D$. Moreover, $P_1'$ is a directed cycle of length at most $|V(H)| - 1$. Thus, as $|V(H)| \leq k$, we find that $P_1'$ is a subdigraph of $D$ isomorphic to a digraph in $\mathcal{C}_k$, a contradiction.
\end{proof}

We now proceed to the construction for cycles containing exactly two blocks, of which one of length one. The construction is based on a large directed shift graph, with further modifications ensuring the desired minimum degree. 

Let $\vec{S}_{m,r}$ denote the directed shift graph with tuples of size $r$ over an alphabet of size $m$. That is, $V(\vec{S}_{m,r}) = \{(a_1, \ldots, a_r) \in [m]^r \, : \, a_1 < \ldots < a_r\}$ and a vertex $(a_1, \ldots, a_r) \in V(\vec{S}_{m,r})$ is out-adjacent to $(b_1, \ldots, b_r) \in V(\vec{S}_{m,r})$ if $a_{i+1} = b_i$ for all $i \in [r-1]$. Note that the directed shift graph is the natural orientation of a shift graph. See \autoref{fig:shift} for an example. As shown by Erdős and Hajnal~\cite{erdos1968chromatic}, $\chi(\vec{S}_{m,r})$ tends to infinity as $m \to \infty$ for all $r \geq 2$.

\begin{figure}
    \centering
    \includegraphics[width=0.4\linewidth]{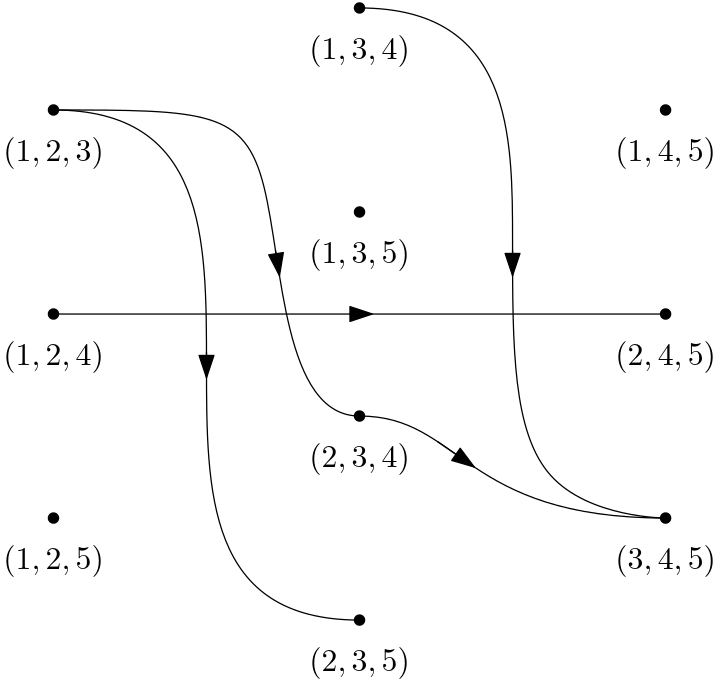}
    \caption{Directed shift graph $\vec{S}_{5,3}$.}
    \label{fig:shift}
\end{figure}

\begin{theorem}\label{thm:counterexample-flipped}
    For each integer $k \geq 3$ there exists an $\varepsilon > 0$ such that for all integers $n$, and $c$, and for each orientation $C$ of a $k$-cycle containing exactly two blocks, one of which has length one, there exists a digraph $D$ with $|V(D)| \geq n$, $\chi(D) \geq c$, and minimum out-degree at least $\varepsilon |V(D)|$, such that $D$ does not contain a subdigraph isomorphic to $C$.
\end{theorem}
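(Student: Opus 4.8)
The plan is to construct, for each fixed $k \ge 3$, a single family of digraphs of unbounded chromatic number and unbounded order, all with minimum out-degree at least $\varepsilon_k|V(D)|$ for a constant $\varepsilon_k$ depending only on $k$, none of which contains any member of $\mathcal{C}_k \cup \mathcal{C}_k'$. This suffices: up to isomorphism there is a unique orientation $C$ of a $k$-cycle with two blocks one of which has length one (it is the directed $k$-cycle with one arc reversed), and $C \in \mathcal{C}_k'$. Large chromatic number will come from the directed shift graph $\vec{S}_{m,r}$ with $r \ge 2$: by Erd\H{o}s and Hajnal~\cite{erdos1968chromatic}, $\chi(\vec{S}_{m,r}) \to \infty$ and $|V(\vec{S}_{m,r})| \to \infty$ as $m \to \infty$. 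Moreover $\vec{S}_{m,r}$ already avoids $\mathcal{C}_k \cup \mathcal{C}_k'$: the first coordinate of a vertex strictly increases along every arc, so $\vec{S}_{m,r}$ has no directed cycle at all; and a copy of an $\ell$-cycle ($3 \le \ell \le k$) with two blocks, one of length one, would be a directed path $x_1 \to x_2 \to \cdots \to x_\ell$ together with the chord $x_1 \to x_\ell$, forcing a strictly increasing sequence $a_1 < a_2 < \cdots < a_{\ell+r-1}$ of alphabet symbols with $x_i = (a_i,\dots,a_{i+r-1})$, and then the chord forces $a_2 = a_\ell$, impossible since $2 < \ell$.

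The difficulty is that $\vec{S}_{m,r}$ has sinks and so badly violates the minimum out-degree requirement; I would augment it while preserving the avoidance property. Concretely, take $p := k+1$ vertex-disjoint copies $V_0, V_1, \dots, V_k$ of $\vec{S}_{m,r}$ arranged cyclically (as in the proof of \autoref{thm:counterexample-directed}), keep all arcs inside each $V_i$, and add a set of \emph{cross-arcs} directed from $V_i$ to $V_{i+1}$ (indices modulo $p$), chosen so that every vertex gains $\Omega_k(|V(D)|)$ out-neighbours; if needed, additionally clone vertices, which is legitimate since cloning preserves the absence of members of $\mathcal{C}_k \cup \mathcal{C}_k'$ by \autoref{lem:cloning}. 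The cross-arcs must be selected so that no new member of $\mathcal{C}_k \cup \mathcal{C}_k'$ is created; the right way to organize them is to make them compatible with the reachability order of the shift graph. To verify the avoidance property, use the copy-index $\sigma \colon V(D) \to \mathbb{Z}/p$: along every arc $\sigma$ either stays fixed (arcs inside a copy) or increases by exactly one (cross-arcs). Hence for a copy of any member $F$ of $\mathcal{C}_k \cup \mathcal{C}_k'$, which is an orientation of a cycle of length at most $k < p$, the total change of $\sigma$ around $F$ is $0$ modulo $p$ while the number of cross-arcs used by $F$ is at most its length; so $F$ uses either no cross-arc, forcing $F \subseteq D[V_i] \cong \vec{S}_{m,r}$ for some $i$, which is impossible by the previous paragraph, or, when $F$ is a $\mathcal{C}_k'$-type cycle, exactly one cross-arc, which one rules out directly from the way the cross-arcs interact with directed paths inside the copies.

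Granting such a $D$, the theorem follows by setting $\varepsilon := \varepsilon_k$: given $n$ and $c$, fix $r \ge 2$ (depending on $k$) and choose $m$ large enough that $\chi(\vec{S}_{m,r}) \ge c$ and $|V(D)| = (k+1)|V(\vec{S}_{m,r})| \ge n$, which is possible by~\cite{erdos1968chromatic}; then $\chi(D) \ge \chi(\vec{S}_{m,r}) \ge c$ since $\vec{S}_{m,r}$ is an induced subdigraph of $D$, $|V(D)| \ge n$, every vertex has out-degree at least $\varepsilon|V(D)|$ by construction, and $D$ contains no copy of $C$ because $C \in \mathcal{C}_k'$. The main obstacle is the design of the cross-arcs: they must be numerous enough to give each vertex linearly many out-neighbours, yet aligned tightly enough with the shift-graph structure that no single-crossing copy of a $\mathcal{C}_k'$-cycle appears. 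The most delicate situation is where a directed path on $k-1$ vertices sits entirely inside one copy $V_i$ while a single vertex of $V_{i+1}$ is a cross-out-neighbour of both endpoints of that path, which would immediately yield a copy of $C$; handling this is exactly where the fine structure of the directed shift graph — its reachability partial order — together with the cloning lemma must be used.
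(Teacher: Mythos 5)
Your reductions are fine: it is correct that the statement follows if, for each $k$, you exhibit one family of digraphs of unbounded order and chromatic number, with linear minimum out-degree, avoiding everything in $\mathcal{C}_k\cup\mathcal{C}_k'$; your verification that $\vec{S}_{m,r}$ itself avoids $\mathcal{C}_k\cup\mathcal{C}_k'$ (monotonicity of the leading entry, and $a_2=a_\ell$ for the chord) is sound; and the copy-index/winding argument correctly shows that any offending cycle in your cyclic arrangement either lies inside one copy or uses exactly one cross-arc on its long directed path together with a cross-arc as its length-one block. But at that point the proof stops. The entire difficulty of the theorem is the step you defer: designing cross-arcs that give \emph{every} vertex (in particular the many low-out-degree vertices of $\vec{S}_{m,r}$) linearly many out-neighbours while guaranteeing that no two vertices $x_1,x_j$ joined by a directed path of $j-1\le k-1$ arcs (possibly itself using one cross-arc) have a common cross-out-neighbour pattern completing a copy of $C$. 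You explicitly acknowledge this ("the main obstacle is the design of the cross-arcs"), and the phrase "compatible with the reachability order" is not a construction: natural candidate rules (e.g.\ cross-arc when all, or the first, entries of the target exceed those of the source) either leave the shift-graph sinks with no cross-out-neighbours or immediately create the forbidden configuration, since a vertex and a vertex reachable from it in $k-2$ steps can easily share a large common cross-out-neighbourhood under such rules. So as written this is a plan with its central lemma missing, not a proof.

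For contrast, the paper does not place dense arcs between copies of a shift graph at all. It takes a relaxed shift graph $\vec{G}_{2m,2k}$ on tuples of length $2k$ (distinct but unordered entries), and routes all "return" arcs through hub vertices $s_{(A,B)},t_{(A,B)}$ indexed by balanced ordered partitions of the alphabet, joined by a directed path $P$ on $k$ vertices. The path $P$ makes any cycle through the added structure too long, and the partition indexing kills exactly your delicate case: if two tuples with a directed path of length $k-2$ between them had a common out-neighbour $s_{(A,B)}$, the shift structure would force an entry to lie in both $A$ and $B$. The linear out-degree is then obtained by balancing the part sizes via cloning (\autoref{lem:cloning}), which is the one ingredient your sketch and the paper share. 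If you want to salvage your architecture, you would need to prove an analogous disjointness statement for your cross-arc rule, and it is not clear such a rule exists; the tuple length $2k$ and the partition trick are what make the paper's version go through.
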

\begin{proof}
    Let $m$ be sufficiently large such that $|V(\vec{S}_{2m, 2k})| \geq n$ and $\chi(\vec{S}_{2m,2k}) \geq c$. 

    We consider a supergraph $\vec{G}_{2m, 2k}$ of $\vec{S}_{2m, 2k}$ defined as follows; $V(\vec{G}_{2m,2k}) = \{(a_1, \ldots, a_{2k}) \in [2m]^{2k} \, : \, a_i \neq a_j \text{ for all } 1 \leq i < j \leq 2k\}$ and a vertex $(a_1, \ldots, a_{2k}) \in V(\vec{G}_{2m,2k})$ is out-adjacent to $(b_1, \ldots, b_{2k}) \in V(\vec{G}_{2m,2k})$ if $a_{i+1} = b_i$ for all $i \in [2k-1]$. That is, $\vec{G}_{2m, 2k}$ is defined analogously to the directed shift graph $\vec{S}_{2m, 2k}$, except that the tuples are not required to be in ascending order. Since $\vec{G}_{2m, 2k}$ contains $\vec{S}_{2m, 2k}$, we have that $|V(\vec{G}_{2m, 2k})|\geq n$ and $\chi(\vec{G}_{2m, 2k}) \geq c$.

    \begin{claim}\label{claim:firstclaim}
        $\vec{G}_{2m, 2k}$ does not contain a subdigraph isomorphic to a digraph in $\mathcal{C}_k \cup \mathcal{C}_k'$.
    \end{claim}
    \begin{subproof}[Proof of \autoref{claim:firstclaim}]
        Suppose for the sake of contradiction that $\vec{G}_{2m, 2k}$ does contain a subdigraph $H$ isomorphic to a digraph $C^*$ in $\mathcal{C}_k \cup \mathcal{C}_k'$. Let $\ell := |V(H)|$, and let $v_1, \ldots, v_{\ell}$ be an ordering of the vertices of $H$ such that $v_1 \rightarrow v_2 \rightarrow \ldots \rightarrow v_{\ell}$, and either $v_{\ell}v_1 \in E(H)$, or $v_1v_{\ell} \in E(H)$, depending on whether $C^* \in \mathcal{C}_k$ or $C^* \in \mathcal{C}_k'$. 

        For each $i \in [\ell]$ and $j \in [2k]$, let $v_{i,j}$ denote the $j^{\text{th}}$ entry in the tuple associated to $v_i$. That is, $v_i = (v_{i,1}, \ldots, v_{i, 2k})$. Let $e \in [2m]$ be the element such that $v_{1, 2k} = e$. By the definition of $\vec{G}_{2m, 2k}$, the adjacency of the vertices $v_1, \ldots, v_{\ell}$, and as $\ell < 2k$, we have that $v_{i, 2k - i + 1} = e$ for all $i \in [\ell]$. In particular, $v_{\ell, 2k - \ell + 1} = e$. Since $\ell \geq 3$, and as each element in $[2m]$ appears at most once in each tuple defining a vertex, $v_{\ell}$ and $v_1$ cannot be adjacent, contradicting the definition of $H$. By this contradiction, we conclude that the claim holds.
    \end{subproof}

    Note that by \autoref{claim:firstclaim}, $\vec{G}_{2m, 2k}$ does not contain a subdigraph isomorphic to $C$. Since \linebreak $|V(\vec{G}_{2m,2k})|\geq n$ and $\chi(\vec{G}_{2m, 2k}) \geq c$, the only remaining concern is the minimum out-degree requirement. Indeed, $\vec{G}_{2m, 2k}$ has minimum out-degree $2m - 2k + 1$, which is not a constant factor  of $|V(\vec{G}_{2m,2k})|$. To address this issue, we will add additional structures to increase the minimum out-degree.

    First, for each ordered partition $(A, B)$ of $[2m]$ with $|A| = |B| = m$, add vertices $s_{(A,B)}$ and $t_{(A,B)}$. For each vertex $s_{(A,B)}$ we add arcs from all vertices of the form $(a_1, \ldots, a_k, b_1, \ldots, b_k) \in V(\vec{G}_{2m,2k})$ where $a_1, \ldots, a_k \in A$ and $b_1, \ldots, b_k \in B$ to $s_{(A,B)}$. For each vertex $t_{(A, B)}$ we add arcs from $t_{(A,B)}$ to all vertices of the form $(a_1, \ldots, a_k, b_1, \ldots, b_k) \in V(\vec{G}_{2m,2k})$ where $a_1, \ldots, a_k \in A$ and $b_1, \ldots, b_k \in B$. Let $S$ and $T$ be the sets of all vertices of the form $s_{(A,B)}$ and of the form $t_{(A,B)}$ respectively. Next, we add a directed path $P$ of length $k - 1$ given by vertices $p_1, \ldots, p_k$ where $p_ip_{i+1} \in E(P)$ for all $i \in [k-1]$. Finally, make $S$ out-complete to $p_1$, and $T$ in-complete from $p_k$.

    Let $D'$ be the resulting digraph. Thus, $V(D') = V(\vec{G}_{2m, 2k}) \cup S \cup T \cup V(P)$.

    \begin{claim}\label{claim:secondclaim}
        $D'$ does not contain a subdigraph isomorphic to a digraph in $\mathcal{C}_k \cup \mathcal{C}_k'$.
    \end{claim}
    \begin{subproof}[Proof of \autoref{claim:secondclaim}]
        Suppose for the sake of contradiction that $D'$ does contain a subdigraph $H$ isomorphic to a digraph $C^*$ in $\mathcal{C}_k \cup \mathcal{C}_k'$. We first note that due to path $P$ having length $k$ and only the endpoints being connected with vertices in $V(D')\setminus V(P)$, the internal vertices of path $P$ do not lie in any orientation of a $k$-cycle in $D'$. Thus, $p_2, \ldots, p_{k-1} \not\in V(H)$. Suppose that $p_1 \in V(H)$. Since the sole out-neighbour of $p_1$ in $D'$ is $p_2$, it follows that $p_1$ is contained in two blocks of $H$, and thus $C^* \in \mathcal{C}_k'$. Let $u \in V(H)$ be the vertex such that $H[\{u, p_1\}]$ is the block of length one in $H$. As all in-neighbours of $p_1$ in $D$ are in $S$, it follows that $u \in S$. Let $v \in V(H)$ be the next vertex along the cycle after $u$. Because $p_1$ is the only out-neighbour of vertices in $S$ in $D'$, it follows that $H[\{v, u, p_1\}]$ forms a directed subpath of $H$. However, this contradicts $H[\{u, p_1\}]$ being a block of $H$. By this contradiction, we conclude that $p_1 \not \in V(H)$. Analogously it follows that $p_k \not \in V(H)$. Hence, $V(H) \cap V(P) = \emptyset$.

        Now suppose that $H$ contains a vertex $s_{(A,B)} \in S$. As $s_{(A,B)}$ has no out-neighbours in $D' - V(P)$, the vertex $s_{(A,B)}$ must be contained in two blocks of $H$. Therefore, we have that $C^* \in \mathcal{C}_k'$. Let $u = (a_1, \ldots, a_k, b_1, \ldots, b_k)$ and $u' = (a_1', \ldots, a_k', b_1', \ldots, b_k')$ be the two vertices adjacent to $s_{(A,B)}$ in $H$, where $a_1, \ldots, a_k, a_1', \ldots, a_k' \in A$ and $b_1, \ldots, b_k, b_1', \ldots, b_k' \in B$. By the structure of $C^*$, it follows that $H$ contains a directed path of length $k-2$ between $u$ and $u'$. Without loss of generality, assume that this path goes from $u$ to $u'$. We note that this path does not include vertices in $S$ or $T$, as those have no out- and in-neighbours in $D' - V(P)$ respectively. Then, by the definition of the arcset of $\vec{G}_{2m,2k}$, it follows that $a_3' = b_k$. Thus, $a_3' \in A$ and $a_3' \in B$, a contradiction. Thus, $V(H) \cap S = \emptyset$, and analogously it follows that $V(H) \cap T = \emptyset$.

        Thus, $H$ must be a subdigraph of $D' - (V(P) \cup S \cup T) = \vec{G}_{2m,2k}$. This contradicts \autoref{claim:firstclaim}.
    \end{subproof}

    The digraph $D'$ still does not have the desired minimum out-degree. Each vertex in $V(\vec{G}_{2m,2k})$ has out-degree $2m - 2k + 1 + \binom{2m-2k}{m-k}$ in $D'$. Vertices in $S$ and $V(P) \setminus \{p_k\}$ have out-degree one, $p_k$ has out-degree $\binom{2m}{m}$, and all vertices in $T$ have out-degree $\binom{m}{k}^2\cdot (k!)^2$. To achieve the desired minimum out-degree, we will clone vertices and use \autoref{lem:cloning} to show that the resulting digraph does not contain an orientation of a $k$-cycle consisting of two blocks, one of which has length one.

    Consider the sets $V(\vec{G}_{2m, 2k})$, $S$, $T$, and $V(P)$. We aim for these sets to have approximately the same size. In $D'$, the sizes of these sets are $\frac{(2m)!}{(2m - 2k)!}$, $\binom{2m}{m}$, $\binom{2m}{m}$, and $k$ respectively. Let $q = \max(\frac{(2m)!}{(2m - 2k)!}, \binom{2m}{m}, k)$. For each of the sets $V(\vec{G}_{2m, 2k})$, $S$, $T$, and $V(P)$, repeatedly double the size of the set by cloning each vertex in the set once until the set reaches a size of at least $q/2$. Let $D$ be the resulting digraph. See \autoref{fig:construction} for an illustration of digraph $D$.

    Note that in $D$, each of the sets $V(\vec{G}_{2m, 2k})$, $S$, $T$, and $V(P)$ has size at least $V(D) / 8$. Moreover, we observe that each vertex in $S$ and $V(P) \setminus \{p_k\}$ in $D'$ is out-adjacent to a $1/k$ fraction of the vertices in $V(P)$ and vertex $p_k$ is out-adjacent to all of $T$. Next, we claim that each vertex in $T$ is out-adjacent to at least a $1/2^{2k}$ fraction of $V(\vec{G}_{2m,2k})$ and each vertex in $V(\vec{G}_{2m,2k})$ is out-adjacent to at least a $1/2^{2k}$ fraction of $S$. Namely, for a vertex $t_{A,B} \in T$, consider all its out-neighbours of the form $(a_1, \ldots, a_k, b_1, \ldots, b_k)$ with $a_1, \ldots, a_k \in A$ and $b_1, \ldots, b_k \in B$ by choosing the elements in the order $a_1, b_1, a_2, b_2, \ldots, a_k, b_k$. When choosing $a_i$ for $i \in [k]$, there are $m-i + 1$ remaining choices for elements in $A$, and $2m - 2i +2$ remaining elements in total. Similarly, when choosing $b_i$ for $i \in [k]$, there are $m-i + 1$ remaining choices for elements in $B$, and $2m-2i + 1$ remaining elements in total. Hence, for each of the $2k$ choices, at least half of the options are valid choices. Thus, each vertex in $T$ is out-adjacent to at least a $1/2^{2k}$ fraction of $V(\vec{G}_{2m,2k})$. Since the underlying undirected bipartite graph between $\vec{G}_{2m,2k}$ and $S$ is biregular, by an analogous argument and using symmetry, each vertex in $V(\vec{G}_{2m,2k})$ is out-adjacent to at least a $1/2^{2k}$ fraction of $S$. These lower bounds on the relative out-degrees are unaffected by the repeated cloning, as within each of the parts $V(\vec{G}_{2m,2k}), S, T, V(P)$ each vertex is cloned an equal number of times. Hence, these relations also hold for $D$. 

    Then, as each of the sets $V(\vec{G}_{2m, 2k})$, $S$, $T$, and $V(P)$ has size at least $V(D) / 8$, each vertex in $D$ has minimum out-degree at least $1/2^{2k} \cdot 1/8 = 2^{-2k-3}$. Hence, for $\varepsilon \leq 2^{-2k-3}$, digraph $D$ has minimum out-degree at least $\varepsilon|V(D)|$, as desired. 
    
    Moreover, by \autoref{lem:cloning} and \autoref{claim:secondclaim}, $D$ does not contain a subdigraph isomorphic to a digraph in $\mathcal{C}_k \cup \mathcal{C}_k'$. Thus, $D$ does not contain an orientation of a $k$-cycle consisting of two blocks, one of which has length one, as desired.
\end{proof}

\begin{figure}
    \centering
    \includegraphics[width=0.8\linewidth]{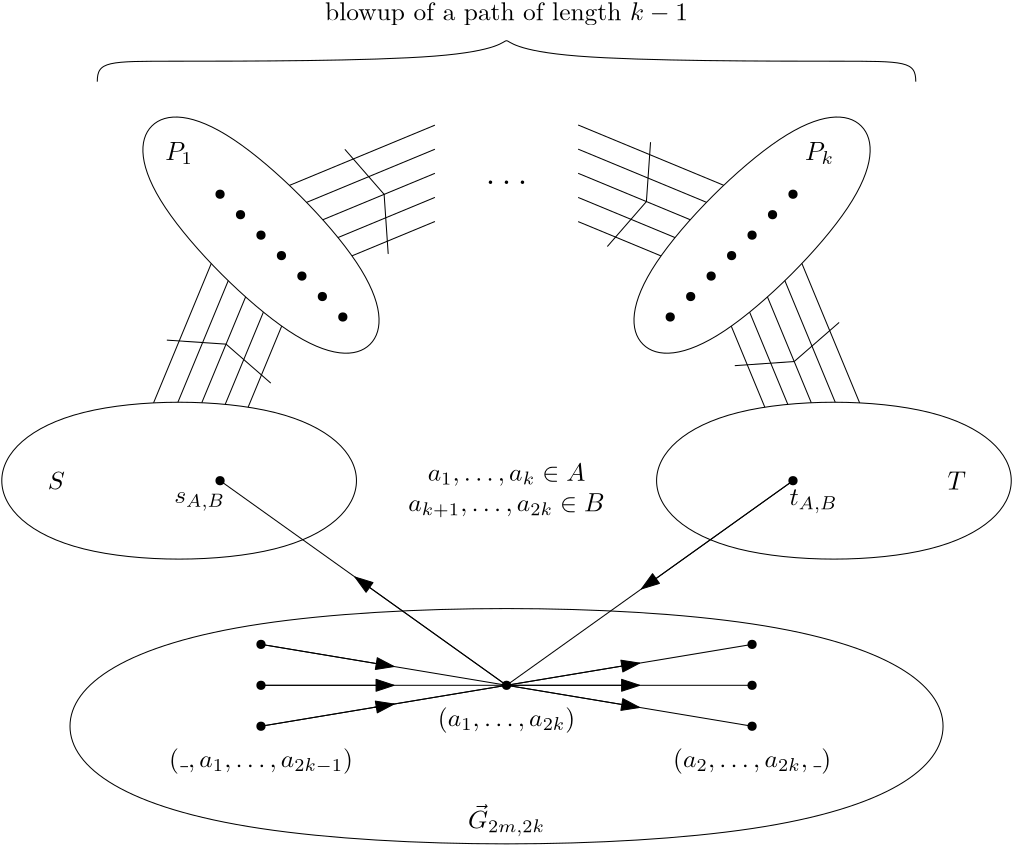}
    \caption{Illustration of the construction used in the proof of \autoref{thm:counterexample-flipped}.}
    \label{fig:construction}
\end{figure}

We note that the construction in the proof of \autoref{thm:counterexample-flipped} additionally does not contain directed $k$-cycles, and thus provides an alternate proof for \autoref{thm:counterexample-directed}.

Observing that directed $2$-cycles and all orientations of triangles are covered by \autoref{thm:counterexample-directed} and \autoref{thm:counterexample-flipped}, noting that the other orientation of $2$-cycles does not occur due to the exclusion of parallel arcs, and using that $|V(D)| \geq \chi(D)$ for all digraphs $D$, \autoref{thm:maintheorem} now follows directly from \autoref{lem:threeblocks}, \autoref{lem:twoblockseachlengthtwo}, \autoref{thm:counterexample-directed}, and \autoref{thm:counterexample-flipped}.

\section{Related problems}
The results in this paper relate to the following question, which is a more general version of \autoref{question:mainquestion}:
\begin{question}\label{question:further_question1}
    For which fixed digraphs $H$ does there exist a constant $c_{\varepsilon} \geq 1$ for all $\varepsilon > 0$ such that every digraph $D$ without loops or parallel arcs with $\chi(D) \geq c_{\varepsilon}$ and minimum out-degree at least $\varepsilon |V(D)|$ contains $H$ as a subdigraph?
\end{question}
Specifically, \autoref{thm:maintheorem} characterizes the orientations of cycles for which \autoref{question:further_question1} is answered in the affirmative. 

Combining the fact that each sufficiently large graph with a linear number of edges contains a large complete bipartite subgraph~\cite{alon2003turan} with a Ramsey-type argument where we colour the arcs based on their orientation between the two sets of the bipartition, we obtain that each sufficiently large digraph with a linear number of arcs contains a large directed complete bipartite subdigraph where all the arcs have the same orientation between the two sets of the bipartition. Hence, \autoref{question:further_question1} has an affirmative answer for all subdigraphs of such directed complete bipartite digraphs.

For digraphs for which \autoref{question:further_question1} is not answered in the affirmative, it may be interesting to ask for which values of $\varepsilon$ the statement of \autoref{question:further_question1} does hold.

\begin{question}\label{question:directed_chromatic_threshold}
    For a fixed digraph $H$, what is the minimum value $\varepsilon_H \geq 0$ such that there exists a constant $c \geq 1$ such that every digraph $D$ without loops or parallel arcs with $\chi(D) \geq c$ and minimum out-degree at least $\varepsilon_H |V(D)|$ contains $H$ as a subdigraph?
\end{question}
Note that \autoref{question:directed_chromatic_threshold} is well-defined for all digraphs $H$, as the statement trivially holds for $\varepsilon_H = 1$. The notion $\varepsilon_H$ in \autoref{question:directed_chromatic_threshold} provides a directed analogue to the chromatic threshold as studied for undirected graphs. \autoref{thm:maintheorem} may be interpreted as characterizing the orientations of cycles for which the directed chromatic threshold is equal to zero. 

It would additionally be interesting to study variants of \autoref{question:further_question1} and \autoref{question:directed_chromatic_threshold} where instead of considering digraphs of large chromatic number, one considers digraphs of large dichromatic number, a directed analogue of the chromatic number introduced by Neumann-Lara~\cite{NEUMANNLARA1982265}.

\bibliographystyle{plain}
\bibliography{thebib.bib}

\begin{thebibliography}{10}

\bibitem{ALLEN2013261}
Peter Allen, Julia Böttcher, Simon Griffiths, Yoshiharu Kohayakawa, and Robert
  Morris.
\newblock The chromatic thresholds of graphs.
\newblock {\em Advances in Mathematics}, 235:261--295, 2013.

\bibitem{alon2003turan}
Noga Alon, Michael Krivelevich, and Benny Sudakov.
\newblock Tur{\'a}n numbers of bipartite graphs and related ramsey-type
  questions.
\newblock {\em Combinatorics, Probability and Computing}, 12(5-6):477--494,
  2003.

\bibitem{burrbest}
Stéphane Bessy, Daniel Gonçalves, and Amadeus Reinald.
\newblock Oriented trees in ${O}(k \sqrt{k})$-chromatic digraphs, a
  subquadratic bound for {B}urr's conjecture.
\newblock {\em arXiv preprint arXiv:2402.19351}, 2024.

\bibitem{brandt2011dense}
Stephan Brandt and St{\'e}phan Thomass{\'e}.
\newblock Dense triangle-free graphs are four-colorable: A solution to the
  {E}rd{\H{o}}s-{S}imonovits problem.
\newblock {\em preprint}, 2011.

\bibitem{burr1980subtrees}
Stefan~A Burr.
\newblock Subtrees of directed graphs and hypergraphs.
\newblock In {\em Proceedings of the Eleventh Southeastern Conference on
  Combinatorics, Graph Theory and Computing, Boca Raton, Congr. Numer},
  volume~28, pages 227--239, 1980.

\bibitem{erdos1968chromatic}
Paul Erdős and Andr{\'a}s Hajnal.
\newblock On chromatic number of infinite graphs.
\newblock In {\em Theory of Graphs (Proc. Colloq., Tihany, 1966)}, pages
  83--98, 1968.

\bibitem{ERDOS1973323}
Paul Erdős and Miklós Simonovits.
\newblock On a valence problem in extremal graph theory.
\newblock {\em Discrete Mathematics}, 5(4):323--334, 1973.

\bibitem{goddard2011}
Wayne Goddard and Jeremy Lyle.
\newblock Dense graphs with small clique number.
\newblock {\em Journal of Graph Theory}, 66(4):319--331, 2011.

\bibitem{luczak2010coloring}
Tomasz {\L}uczak and St{\'e}phan Thomass{\'e}.
\newblock Coloring dense graphs via {VC}-dimension.
\newblock {\em arXiv preprint arXiv:1007.1670}, 2010.

\bibitem{NEUMANNLARA1982265}
Victor Neumann-Lara.
\newblock The dichromatic number of a digraph.
\newblock {\em Journal of Combinatorial Theory, Series B}, 33(3):265--270,
  1982.

\bibitem{Thomassen2002}
Carsten Thomassen.
\newblock On the chromatic number of triangle-free graphs of large minimum
  degree.
\newblock {\em Combinatorica}, 22(4):591--596, 2002.

\bibitem{Thomassen2007}
Carsten Thomassen.
\newblock On the chromatic number of pentagon-free graphs of large minimum
  degree.
\newblock {\em Combinatorica}, 27(2):241--243, 2007.

\end{thebibliography}

\end{document}